\DeclareMathAlphabet{\mathcal}{OMS}{cmsy}{m}{n}
\newtheorem{theo}{Theorem}[section]
\newtheorem{lemm}[theo]{Lemma}
\newtheorem{prop}[theo]{Proposition}
\theoremstyle{definition}
\newtheorem{defi}[theo]{Definition}
\newtheorem{cons}[theo]{Construction}
\newtheorem{rem}[theo]{Remark}
\newtheorem*{theo*}{Theorem}
\numberwithin{equation}{section}
\newcommand{\cat}{\mathbf}
\newcommand{\on}{\operatorname}
\newcommand{\id}{\mathrm{id}}
\newcommand{\Map}{\on{Map}}
\newcommand{\Hom}{\mathsf{Hom}}
\newcommand{\Aut}{\mathsf{Aut}}
\newcommand{\C}{\mathbb{C}}
\newcommand{\Q}{\mathbb{Q}}
\newcommand{\Z}{\mathbb{Z}}
\newcommand{\R}{\mathbb{R}}
\newcommand{\GT}{\mathrm{GT}}
\newcommand{\od}{\mathscr{D}} %operad of little disks
\newcommand{\og}{\mathscr{G}rav} %gravity operad chain level
\newcommand{\opab}{\mathscr{P}a\mathscr{B}} %PaB
\renewcommand{\mod}{\mathcal{M}} %moduli space
\title{On two chain models for the gravity operad}
\author{Cl\'{e}ment Dupont}
\author{Geoffroy Horel}
\begin{document}

\address{Institut Montpelli\'{e}rain Alexander Grothendieck, CNRS, Univ. Montpellier, France.}
\email{clement.dupont@umontpellier.fr}

\address{Université Paris 13, Sorbonne Paris Cité, Laboratoire Analyse, Géométrie et Applications, CNRS (UMR 7539), 93430, Villetaneuse, France.}
\email{horel@math.univ-paris13.fr}

\keywords{}

\begin{abstract}
In this note we recall the construction of two chain level lifts of the gravity operad, one due to Getzler--Kapranov and one due to Westerland. We prove that these two operads are formal and that they indeed have isomorphic homology.
\end{abstract}

\maketitle

\tableofcontents

\section{Introduction}

The gravity operad is an operad which was introduced by Getzler in \cite{getzlertopologicalgravity,getzleroperadsmodulispaces}. It is an operad in graded vector spaces over the rationals whose arity $n$ space is given by $H_{*-1}(\mod_{0,n+1})$, where $\mod_{0,n+1}$ is the moduli space of genus zero Riemann surfaces with $n+1$ marked points\footnote{Here and throughout this article, $H_*(X)$ denotes the singular homology with rational coefficients of a topological space $X$.}. Getzler gives two seemingly different descriptions of the operadic structure. 

On the one hand, there exists an injective transfer map $H_{*-1}(\mod_{0,n+1})\to H_*(\od(n))$ where $\od(n)$ denotes the arity $n$ space of the operad of little $2$-disks. This comes from the fact the $\mod_{0,n+1}$ is homotopy equivalent to the quotient of $\od(n)$ by the action of the circle $S^1$. Getzler observes that the collection of subspaces $H_{*-1}(\mod_{0,n+1})$ is stable under operadic composition and thus inherits an operad structure from the operad of little $2$-disks.

On the other hand, one can consider the Deligne--Mumford compactification $\overline{\mod}_{0,n+1}$ of $\mod_{0,n+1}$. The complement $\overline{\mod}_{0,n+1}-\mod_{0,n+1}$ is a normal crossing divisor which induces a stratification of $\overline{\mod}_{0,n+1}$ indexed by trees. The strata of codimension $1$ are isomorphic to products $\mod_{0,r+1}\times\mod_{0,s+1}$ with $r+s=n+1$, and we thus get residue morphisms
\[H^*(\mod_{0,n+1})\to H^{*-1}(\mod_{0,r+1}\times\mod_{0,s+1})\]
which, after dualization, can be shown to give  an operad structure on the collection of graded vector spaces $H_{*-1}(\mod_{0,n+1})$.

Each of these two definitions of the gravity operad can be lifted to the chain level. For the first definition, this was done by Westerland in \cite{westerlandequivariant}. We can consider the operad in chain complexes $C_*(\od)$ and observe that it supports an action of the group $S^1$. Taking homotopy fixed points in a suitably rigid way, we can construct an operad $C_*(\od)^{hS^1}$ equipped with a map $C_*(\od)^{hS^1}\to C_*(\od)$. Moreover Westerland observes that the homology of $C_*(\od)^{hS^1}$ together with its map to the homology of $C_*(\od)$ exactly recovers the definition of the gravity operad as a suboperad of $H_*(\od)$. 

The second definition can also be lifted to the level of chains, as observed by Getzler and Kapranov \cite{getzlerkapranovmodular}. Indeed, using differential forms with logarithmic singularities, the residue map can be modeled at the chain level. This allows one to construct a cooperad in the category of nuclear Fréchet spaces given in arity $n$ by the cochain complex $E^{*-1}(\overline{\mod}_{0,n+1},\log\partial\overline{\mod}_{0,n+1})$. Note that we have to work with a completed tensor product for the K\"unneth formula to hold at the chain level.

The goal of the present paper is to explain how these models of the gravity operad and their homology are related. Firstly, we prove that the two chain models for the gravity operad are formal, i.e. that they are quasi-isomorphic to their homology as operads. Secondly, we prove that the two models have isomorphic homology. These results combined show that all four operads contain essentially the same homotopical information. The second result is nothing but the equivalence of the two definitions of the gravity operad, whose proof we could not find in the literature, although it has been implicitly used in many references. On the one hand, the description as a suboperad of $H_*(\od)$ leads \cite{getzlertopologicalgravity} to give a presentation of the gravity operad; on the other hand, the description in terms of residue maps shows \cite{getzleroperadsmodulispaces} that the gravity operad is Koszul dual to the hypercommutative operad -- the operad structure on the collection of graded vector spaces $H_*(\overline{\mod}_{0,n+1})$ coming from gluing curves along marked points. For this reason, we believe that this comparison between the two definitions, although unsurprising to experts, is a useful addition to the literature.

Let us say a few words about the proofs of the two formality results. For the Westerland model, we use a criterion due to Sullivan in the context of differential graded algebras. The idea is to lift a grading automorphism of the homology of our operad (i.e. an automorphism that acts in homological degree $n$ by multiplication by $\alpha^n$ for some unit $\alpha$ of infinite order) to an automorphism at the level of chains that can then be used to produce a splitting of the chain operad. In order to do this we need a large supply of automorphisms of our operad. In fact, we construct an action of the $\mathbb{Q}$-points of the Grothendieck--Teichmüller group $\GT$ on the Westerland model. Using the surjectivity of the cyclotomic character map $\GT(\mathbb{Q})\to\Q^\times$, we obtain the desired lifting. For the Getzler--Kapranov model in terms of differential forms with logarithmic singularities, we recall the folklore proof of formality, which uses Deligne's mixed Hodge theory. The purity of the mixed Hodge structure on the cohomology of the spaces $\mod_{0,n+1}$ implies that the subcomplex of holomorphic differential forms has zero differential and still computes the cohomology of $\mod_{0,n+1}$. Therefore, we get an explicit suboperad with zero differential and which is such that the inclusion is a quasi-isomorphism.

	\subsection*{Notations and conventions}
	
	A $n$-tree is a reduced rooted tree with leaves labeled by $\{1,\ldots,n\}$. For $X$ a topological space, we denote by $H_*(X)$ (resp. $H^*(X)$) the homology (resp. cohomology) groups of $X$ with coefficients in $\Q$. By convention, our operads do not have arity $0$ operations.
	
	\subsection*{Acknowledgements}
	
	We thank the Max-Planck-Institut f\"{u}r Mathematik (Bonn), where this project started, for providing excellent working conditions. We also thank Dan Petersen for several helpful comments about a first draft of this paper.

\section{The Westerland model}

	\subsection{The spectral model}

		Let $\od$ be the little $2$-disks operad. This is an operad in the category of topological spaces. The space $\od(n)$ has the $\Sigma_n$-equivariant homotopy type of the space of ordered configurations of $n$ points in the plane. The operad $\od$ possesses an action of the circle. There is a weak equivalence $\od(n)/S^1\simeq \mod_{0,n+1}$ for $n\geq 2$ where $\mod_{0,n+1}$ is the moduli space of genus $0$ curves with $n+1$ marked points. Note that, since the action of $S^1$ on the space $\od(n)$ is free for $n\geq 2$, this quotient really is a homotopy quotient.
		
		\begin{prop}\label{prop: Klein}
		Let $X$ be a spectrum with an $S^1$-action that is induced (i.e., weakly equivalent as an $S^1$-spectrum to $Y\wedge \Sigma^\infty_+S^1$ for some spectrum $Y$). Then the norm map
		\[\Sigma X_{hS^1}\to X^{h S^1}\]
		is a weak equivalence.
		\end{prop}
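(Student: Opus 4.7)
The plan is to reduce to a universal example and then compute both sides explicitly using the Wirthm\"uller isomorphism. Since $(-)_{hS^1}$ and $(-)^{hS^1}$ are homotopy invariant functors of $S^1$-spectra and the norm map is natural, I may replace $X$ by any weakly $S^1$-equivalent model; in particular, I take $X = Y \wedge \Sigma^\infty_+ S^1$ with $S^1$ acting on itself by translation.

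For the coinvariants, since $S^1$ acts freely on itself, the Borel construction gives $ES^1 \times_{S^1} S^1 \simeq ES^1$, which is contractible, so
\[ X_{hS^1} \simeq Y \wedge \Sigma^\infty_+ (ES^1 \times_{S^1} S^1) \simeq Y. \]
For the homotopy fixed points, the essential input is the Wirthm\"uller isomorphism applied to the inclusion $\{e\}\hookrightarrow S^1$: since $S^1$ is a $1$-dimensional Lie group with trivial adjoint representation, the induced spectrum $Y \wedge \Sigma^\infty_+ S^1$ is equivalent, as an $S^1$-spectrum, to the coinduction $F(\Sigma^\infty_+ S^1, \Sigma Y)$. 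Using this together with the fact that the diagonal $S^1$-action on $ES^1 \times S^1$ is free with quotient equivalent to $ES^1$, one computes
\[ X^{hS^1} \simeq F(\Sigma^\infty_+((ES^1 \times S^1)/S^1), \Sigma Y) \simeq \Sigma Y. \]
Comparing the two calculations yields an abstract equivalence $\Sigma X_{hS^1} \simeq \Sigma Y \simeq X^{hS^1}$.

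The remaining delicate point---and the main obstacle---is to verify that this abstract equivalence coincides with the norm map. My preferred route is to argue through the Tate construction: the norm fits in a cofiber sequence
\[ \Sigma X_{hS^1} \to X^{hS^1} \to X^{tS^1}, \]
so it suffices to prove the vanishing $X^{tS^1} \simeq *$ under the inducedness hypothesis. This is classical: for any compact Lie group $G$, if $X$ is a free $G$-spectrum then $X^{tG} \simeq *$, because smashing with the cofiber $\widetilde{EG}$ of $EG_+ \to S^0$ annihilates $EG_+ \wedge X \simeq X$. This is precisely the step where the hypothesis that $X$ is induced enters; the other steps are essentially formal, but require a careful bookkeeping of the $S^1$-actions in order to identify the resulting equivalence with the norm map itself rather than some unrelated equivalence.
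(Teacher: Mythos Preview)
Your argument is essentially correct, and it goes well beyond what the paper actually does: the paper's proof consists of a single sentence, ``This is classical. See for instance \cite[Theorem D]{kleindualizing},'' with no further details. So there is nothing to compare at the level of strategy.

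One small remark on your write-up: your justification of the vanishing $X^{tS^1}\simeq *$ via ``smashing with $\widetilde{EG}$ annihilates $EG_+\wedge X\simeq X$'' is phrased in the language of \emph{genuine} equivariant spectra, whereas the statement lives in the Borel setting (spectra with $S^1$-action). In the Borel world $EG_+\wedge X\simeq X$ holds for \emph{every} $X$, not just induced ones, so that sentence does not isolate the role of the hypothesis. The cleaner Borel argument is the one you already gave in the first half: the equivariant Atiyah/Wirthm\"uller self-duality $\Sigma^\infty_+ S^1\simeq \Sigma^{-1}F(\Sigma^\infty_+ S^1,\mathbb{S})$ identifies $\mathrm{Ind}(Y)\simeq \mathrm{Coind}(\Sigma Y)$, and then the adjunctions give $X_{hS^1}\simeq Y$ and $X^{hS^1}\simeq \Sigma Y$. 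That the norm realizes this equivalence (equivalently, that $X^{tS^1}$ vanishes on induced objects) then follows because the norm is a natural transformation of exact functors and the Borel category is generated under colimits by induced objects, so it suffices to check the single case $Y=\mathbb{S}$, where it is a direct computation. This is a cosmetic fix; the substance of your argument is fine.
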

		
		\begin{proof}
		This is classical. See for instance \cite[Theorem D]{kleindualizing}.
		\end{proof}
		
		It is easy to verify that $\Sigma_+^\infty\od(n)$ is induced for $n\geq 2$. In fact this is already true at the space level, since the space $\od(n)$ is weakly equivalent to $S^1\times\mod_{0,n+1}$. It follows that there is an equivalence
		\[\Sigma\Sigma_+^\infty\mod_{0,n+1}\simeq\Sigma\Sigma_+^\infty\od(n)_{hS^1}
		\xrightarrow{\sim} (\Sigma_+^\infty \od(n))^{hS^1}\ .\]
		Since the functor $X\mapsto X^{hS^1}$ can be made lax monoidal, the spectra $(\Sigma_+^\infty \od(n))^{hS^1}$ form an operad in spectra. Let $H\Q$ denote the rational Eilenberg--MacLane spectrum. 

		\begin{defi}
		The collection of spectra $H\Q\wedge (\Sigma_+^\infty \od(n))^{hS^1}$ form an operad in $H\Q$-modules, that we call the \emph{Westerland spectral model of the gravity operad}.
		\end{defi}		
		
		By the above discussion, this operad has the homotopy type of $H\Q\wedge\Sigma\Sigma_+^\infty\mod_{0,n+1}$ in arity $\geq 2$ and is given by $H\Q$ in arity $1$. Note that since the spectrum $H\Q\wedge\Sigma_+^\infty\od(n)$ is also $S^1$-induced, there is a weak equivalence
		\[ (H\Q\wedge \Sigma\Sigma_+^\infty \od(n))_{hS^1}\simeq (H\Q\wedge \Sigma_+^\infty \od(n))^{hS^1}\ .\]
for $n\geq 2$. This immediately implies the following proposition.
		
		\begin{prop}\label{prop: fixed points commute HQ}
		There is a weak equivalence of operads
		\[H\Q\wedge((\Sigma^\infty_+\od)^{hS^1})\xrightarrow{\sim} (H\Q\wedge(\Sigma^\infty_+\od))^{hS^1}\ .\]
		\end{prop}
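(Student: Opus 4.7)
The plan is to prove the statement arity-wise and then observe that the comparison map is upgraded to a map of operads via naturality and the lax monoidal structure on homotopy fixed points already invoked in the construction of the Westerland model.

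For arity $n \geq 2$, I would show that both sides are naturally equivalent to $H\Q \wedge \Sigma(\Sigma^\infty_+\od(n))_{hS^1}$. The spectrum $\Sigma^\infty_+\od(n)$ is $S^1$-induced by the preceding discussion, and smashing with $H\Q$ preserves this property since $H\Q \wedge (Y \wedge \Sigma^\infty_+ S^1) \simeq (H\Q \wedge Y) \wedge \Sigma^\infty_+ S^1$. Applying Proposition~\ref{prop: Klein} to $\Sigma^\infty_+\od(n)$ and to $H\Q \wedge \Sigma^\infty_+\od(n)$ separately, one obtains
\[
H\Q \wedge (\Sigma^\infty_+\od(n))^{hS^1} \;\simeq\; H\Q \wedge \Sigma(\Sigma^\infty_+\od(n))_{hS^1} \;\simeq\; \Sigma(H\Q \wedge \Sigma^\infty_+\od(n))_{hS^1} \;\simeq\; (H\Q \wedge \Sigma^\infty_+\od(n))^{hS^1},
\]
where the middle equivalence uses that $H\Q \wedge (-)$ commutes with suspension and with the homotopy colimit that defines $(-)_{hS^1}$. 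For arity $n=1$, $\od(1)$ is a point and both constructions reduce to $H\Q$ according to the convention set up just after the definition of the Westerland model.

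To upgrade this arity-wise equivalence to a morphism of operads, I would use that the unit map $X \to H\Q \wedge X$ is lax symmetric monoidal and that the functor $(-)^{hS^1}$ carries a lax monoidal structure (the very structure that puts an operad structure on each side). The induced comparison map is natural and compatible with operadic composition by functoriality of both constructions.

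The main obstacle I anticipate is not really mathematical but a point-set bookkeeping issue: one must fix compatible rigid models of $(-)^{hS^1}$, of the smash product over $H\Q$, and of $\Sigma_+^\infty$, such that the two lax monoidal structures we are combining strictify to produce an honest morphism of operads rather than a zig-zag. Once such a framework is in place, the proposition is, as the authors put it, an immediate consequence of the norm equivalence established above.
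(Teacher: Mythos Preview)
Your proposal is correct and follows essentially the same route as the paper: the authors deduce the proposition directly from the observation that $H\Q\wedge\Sigma^\infty_+\od(n)$ is again $S^1$-induced, so that Proposition~\ref{prop: Klein} applies on both sides and the comparison passes through homotopy orbits, exactly as you spell out. Your write-up is in fact more explicit than the paper's one-line justification, and your closing remark about the point-set bookkeeping needed to get an honest map of operads is apt, since the paper leaves this implicit as well.
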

		
		\subsection{The chain complex model}
		
		The homotopy theory of $H\Q$-modules is naturally equivalent to that of chain complexes over $\Q$ as was established by Schwede and Shipley (see \cite[Theorem 5.1.6]{schwedestable}). This equivalence can be made symmetric monoidal as proved in \cite[Theorem 7.1.2.13]{luriehigher}. Therefore, the Westerland spectral model $H\Q\wedge((\Sigma^\infty_+\od)^{hS^1})$ of the gravity operad corresponds to an operad in chain complexes which is uniquely defined up to quasi-isomorphism. By Proposition \ref{prop: fixed points commute HQ} this operad in chain complexes should be defined as $C_*(\od)^{hS^1}$ where $C_*$ is our notation for the singular chain complex with rational coefficients. The only difficulty is to make sense of this homotopy fixed point construction in a rigid enough way, so that $C_*(\od)^{hS^1}$ is indeed an operad. A chain complex with an $S^1$-action can be defined as a chain complex with an action of the cdga $C_*(S^1)$. The singular chains of any topological space with an $S^1$-action will possess this structure. The problem is that the category of chain complexes with such an action does not form a symmetric monoidal category because the cdga $C_*(S^1)$ is not a Hopf algebra on the nose. One way to get around this difficulty is to use the theory of $\infty$-categories. In order to make this note more self-contained, we have chosen a different and more concrete route using simplicial $\Q$-vector spaces.
		
		We denote by $N$ the functor that assigns to a simplicial $\Q$-vector space its normalized chain complex. For $X$ a simplicial set, we denote by $S_\bullet(X)$ the simplicial vector space whose $n$-simplices is the free $\Q$-vector space with basis $X_n$. If $X$ is a topological space, we denote by $S_\bullet(X)$ the simplicial $\Q$-vector space $S_\bullet(\on{Sing}(X))$. The functor $S_\bullet$ is strong monoidal. It follows that $S_\bullet(S^1)$ is a simplicial Hopf algebra and moreover the functor $S_\bullet$ induces a symmetric monoidal functor from the category of spaces with an $S^1$-action to the category of simplicial modules over $S_\bullet(S^1)$.
		
		Given two simplicial vector spaces $X$ and $Y$, we denote by $\Hom(X,Y)$ the simplicial vector space whose degree $n$ simplices are the linear maps $X\otimes S_\bullet(\Delta[n])\to Y$ where $\Delta[n]$ is the simplicial set represented by $[n]$. 
		
		\begin{cons}\label{cons: construction of derived Hom}
		Let $A$ be a simplicial algebra. Let $M$ and $N$ be two simplicial left modules over $A$. We can form the cosimplicial simplicial module given by
		\[[n]\mapsto \Hom(A^{\otimes n}\otimes M,N)\]
		Let us explain how the two cofaces $\Hom(M,N)\to \Hom(A\otimes M,N)$  and the codegeneracy $\Hom(A\otimes M,N)\to \Hom(M,N)$ are defined, the higher cofaces and codegeneracies will be clear from that. The first coface is the map $\Hom(M,N)\to\Hom(A\otimes M,N)$ induced by the action $A\otimes M\to M$, the second coface is the following composition
		\[\Hom(M,N)\xrightarrow{A\otimes-}\Hom(A\otimes M,A\otimes N)\to \Hom(A\otimes M,N)\]
		where the second map is induced by the action of $A$ on $N$. Finally the codegeneracy $\Hom(A\otimes M,N)\to \Hom(M,N)$ is given by precomposition with the map
		\[\id_M\otimes u:M\to A\otimes M
		\]
		where $u:\Q\to A$ is the unit of $A$.
		
		We define $\R\Hom_A(M,N)$ to be the totalization of this cosimplicial simplicial vector space. This is a simplicial vector space. Note that, as suggested by the notation, the functor $\R\Hom_A(-,-)$ is indeed a right derived functor of $\Hom_A(-,-)$ in the sense that it preserves weak equivalences in both variables and coincides with $\Hom_A(-,-)$ when the source is a free $A$-module.
		
		Now, if $M$ and $N$ are two chain complexes with an action of a dga $A$, we can define a similar cosimplicial object in chain complexes
		\[[n]\mapsto \Hom(A^{\otimes n}\otimes M,N)\]
		Its totalization (i.e., the total complex of the associated double complex) is denoted $\R\Hom_A(M,N)$.
		\end{cons}
		
		\begin{cons}\label{cons: fixed points}
		Now, we assume that $H$ is a cocommutative simplicial Hopf algebra. The category of simplicial $H$-modules becomes a symmetric monoidal category under the levelwise tensor product of $\Q$-vector spaces. Moreover, the augmentation $H\to \Q$ makes $\Q$ into a module over $H$. It is then easy to verify that the construction $M\mapsto \R\Hom_H(\Q,M)$ is a lax symmetric monoidal functor of the variable $M$. When $G$ is a topological monoid, the simplicial vector space $S_\bullet(G)$ is  a cocommutative Hopf algebra. For $M$ a module over $S_\bullet(G)$, we use the notation $M^{hG}$ instead of $\R\Hom_{S_\bullet(G)}(\Q,M)$.
		\end{cons}
		
		Applying this construction to the operad $S_\bullet(\od)$, we obtain an operad $S_\bullet(\od)^{hS^1}$ in the category of simplicial vector spaces. 
		
		\begin{defi}
		The \emph{Westerland chain model $\og^W$ of the gravity operad} is the operad in chain complexes $N S_\bullet(\od)^{hS^1}$. Its homology is denoted by $\on{Grav}^W$.
		\end{defi}
		
		By construction, this operad comes equipped with a map
		\[\iota:\og^W\to C_*(\od):=NS_\bullet(\od)\]
		We now study the effect of this map on homology. As the homology of a chain complex with an action of $C_*(S^1)$, the homology $H_*(\od(n))$ has an action of the exterior algebra $H_*(S^1)\cong \Q[\Delta]/(\Delta^2)=:\Lambda[\Delta]$ where $\Delta$ has degree $1$. Equivalently, the homology of $H_*(\od(n))$ is equipped with a cohomological differential $\Delta$. Our construction of $\og^W$ involves taking the totalization of a cosimplicial simplicial vector space. Hence, we get a spectral sequence computing the homology of $\og^W$ of the form
		\[\on{E}_2^{s,t}=\on{Ext}^s_{\Lambda[\Delta]}(\Q,H_{t}(\od(n)))\implies H_{t-s}(\og^W(n))
		\]
But as explained in \cite[Lemma 6.2]{westerlandequivariant} the homology $H_*(\od(n))$ is free over $\Lambda[\Delta]$. It follows that all the higher Ext terms are zero and we deduce that $H_k(\og^W(n))$ is the kernel of the operator $\Delta$ acting on $H_k(\od(n))$, recovering the definition of the gravity operad from \cite{getzlertopologicalgravity}.
		
		\subsection{Spectral model vs. chain model}
		
		In this subsection we outline an argument that shows that the operad $\og^W$ is indeed a chain complex model for the operad $(H\Q\wedge\Sigma^\infty_+\od)^{hS^1}$ introduced in the first subsection. As explained there, one would like to construct the homotopy fixed points for the $S^1$-action on $C_*(\od)$ in the category of operads in chain complexes. What we have done instead is take the homotopy fixed points of the $S^1$-action on $S_\bullet(\od)$ in the category of operads in simplicial vector spaces. The category of simplicial vector spaces is equivalent to the category of non-negatively graded chain complexes by a theorem of Dold and Kan. Moreover, we have an adjunction
		\[i:\cat{Ch}_*(\Q)_{\geq 0}\leftrightarrows \cat{Ch}_*(\Q):t_{\geq 0}
		\]
		between non-negatively graded chain complexes and chain complexes in which the left adjoint is the inclusion and the right adjoint sends $C_*$ to $\cdots \rightarrow C_2 \rightarrow C_1 \rightarrow Z_0$. Both adjoints are lax monoidal, therefore this adjunction induces an adjunction
		\[i:\cat{OpCh}_*(\Q)_{\geq 0}\leftrightarrows \cat{OpCh}_*(\Q):t_{\geq 0}
		\]
		between the corresponding categories of operads. Since both $i$ and $t_{\geq 0}$ preserve quasi-isomorphisms, we deduce that $t_{\geq 0}$ preserves homotopy limits. It follows from this discussion that the operad $\og^W$ is modeling $t_{\geq 0}(C_*(\od)^{hS^1})$. But by Proposition \ref{prop: Klein} and Proposition \ref{prop: fixed points commute HQ}, we know that the spectra $(H\Q\wedge \Sigma^\infty_+\od(n))^{hS^1}$ are connective. Using the equivalence between the homotopy theory of $H\Q$-modules and chain complexes, this can be translated by saying that $C_*(\od(n))^{hS^1}$ has homology concentrated in non-negative degrees. It follows that the map
		\[t_{\geq 0}(C_*(\od)^{hS^1})\to C_*(\od)^{hS^1}\]
		is aritywise a quasi-isomorphism and hence is a quasi-isomorphism of operads.

\subsection{An alternative model}

		We denote by $\GT$ the Grothendieck--Teichm\"uller group. This is a proalgebraic group over $\mathbb{Q}$ that fits in a short exact sequence
		\[1\to \GT^1\to\GT\xrightarrow{\chi}\mathbb{G}_m\to 1\]
		The map $\chi:\GT\to\mathbb{G}_m$ is called the cyclotomic character. The group $\GT^1$ is a pro-unipotent group.
		
		In this subsection, we will construct a differential graded operad $\og^{W'}$ that is equipped with an action of $\GT(\Q)$ and that is quasi-isomorphic to Westerland's operad $\og^W$. This action will be used to prove the formality of $\og^{W'}$ and hence also of $\og^W$ in the next subsection. A similar method was used by Petersen in \cite{petersenformality} in order to prove the formality of the little 2-disks operad.
		
		We start with the operad $\opab$ of parenthesized braids. This is an operad in groupoids (its definition can be found in Section 3.1 of \cite{tamarkinformality}). Applying the classifying space functor $B$ aritywise, one gets an operad $B\opab$ in simplicial sets that is weakly equivalent to $\on{Sing}(\od)$ by \cite[Section 3.2]{tamarkinformality}. Let us denote by $\mathbb{Z}$ the abelian group $\mathbb{Z}$ seen as a groupoid with a unique object. This has the structure of a group object in groupoids. In particular it makes sense to say that a groupoid $C$ has an action of $\mathbb{Z}$. This means that there is a morphism of groupoids $\mathbb{Z}\times C\to C$ that satisfies the usual axioms. The operad in groupoids $\opab$ has an action of $\mathbb{Z}$ that is described explicitly in \cite[III 5.2]{fressehomotopy2}. Applying the classifying space functor, we get an action of $B\Z$ on $B\opab$. Up to homotopy, this action is nothing but the  action of $S^1$ on the space of configurations of points in the plane.
		
		Given a group $G$, its prounipotent completion is the universal prounipotent algebraic group $\Gamma$ over $\Q$ equipped with a map $G\to \Gamma(\Q)$. This can be constructed explicitly as the prounipotent group associated to the Lie algebra of primitive elements in the completed group algebra $\Q[G]^{\wedge}$. This construction has been extended to groupoids and operads in groupoids in \cite[Chapter 9]{fressehomotopy1}. The prounipotent completion of $\opab$ is denoted $\opab_\Q$. The action of $\Z$ on $\opab$ induces an action of $\Q$  on $\opab_\Q$ (here $\Q$ denotes the one-object groupoid whose group of arrows is $\Q$, it is also the prounipotent completion of the groupoid $\Z$). This implies that $B\opab_{\Q}$ has an action of $B\Q$ and that the operad $S_\bullet(\opab_\Q)$ is an operad in simplicial modules over the simplicial Hopf algebra $S_\bullet(B\Q)$.  We denote by $\og^{W'}$ the operad $N(S_\bullet(B\opab_\Q)^{hB\Q})$ (see Construction \ref{cons: fixed points}).
		
		The operad $\opab_\Q$ has an action of the group $\GT(\Q)$ (see \cite[Theorem 11.1.7]{fressehomotopy1}). Thus we have an action of $\GT(\Q)$ on $B\opab_\Q$ that is moreover compatible with the action of $B\Q$ in the sense that the action map
		\begin{equation}\label{eqn: action map is equivariant}
		B\Q\times B\opab_\Q(n)\to B\opab_\Q(n)
		\end{equation}
		is equivariant, where the left hand side is given the diagonal action and where we let $\GT(\Q)$ act on $\Q$ through the cyclotomic character (see \cite[Proposition III.5.2.4]{fressehomotopy2}). This implies that the cosimplicial object that enters in the definition of $\og^{W'}$ has a levelwise action of $\GT(\Q)$ that commutes with the cofaces and codegeneracies and hence that the operad $\og^{W'}$ has an action of $\GT(\Q)$ which is such that the map $\og^{W'}\to C_*(B\opab_\Q)$ is $\GT(\Q)$-equivariant.

		Now, we want to prove that $\og^W$ is quasi-isomorphic to $\og^{W'}$. This will rely on the following general lemma about model categories.

		\begin{lemm}\label{lemm: invariance of fixed points}
		Let $\cat{M}$ be a combinatorial simplicial model category. Let $C$ be a small simplicial category. Assume that for each object $c$ of $C$, the inclusion $i_c:\Map_C(c,c)\to C$ is a Dwyer-Kan equivalence of simplicial categories (where a monoid is seen as a category with one object). Let $F:C\to\cat{M}$ be a simplicial functor. Then, the objects $F(c)^{h\Map(c,c)}$ for $c\in C$ are all weakly equivalent.
		\end{lemm}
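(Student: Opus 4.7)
The plan is to identify $F(c)^{h\Map_C(c,c)}$ with the homotopy limit of $F$ over all of $C$, so that this common value is independent of $c$.

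First I would unwind the definition. If we write $G_c=\Map_C(c,c)$ and let $BG_c$ denote the one-object simplicial category with endomorphism monoid $G_c$, then the map $i_c:BG_c\to C$ sends the unique object to $c$, and $F\circ i_c:BG_c\to\cat{M}$ is precisely the object $F(c)$ equipped with the natural $G_c$-action coming from the simplicial functor $F$. By the standard model-categorical definition of the homotopy fixed-point construction in a simplicial model category, one has
\[
F(c)^{h\Map_C(c,c)} \;\simeq\; \on{holim}_{BG_c}(F\circ i_c).
\]
(This is compatible with the explicit Construction \ref{cons: fixed points} used in the body of the paper, via the Dold--Kan correspondence.)

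Next I would invoke the key general fact: if $i:D\to C$ is a Dwyer--Kan equivalence of small simplicial categories and $\cat{M}$ is a combinatorial simplicial model category, then restriction along $i$ is a right Quillen equivalence between the projective model structures on $\Fun(C,\cat{M})$ and $\Fun(D,\cat{M})$. In particular, for any simplicial functor $F:C\to\cat{M}$ the canonical map
\[
\on{holim}_C F \;\xrightarrow{\sim}\; \on{holim}_D(F\circ i)
\]
is a weak equivalence. This is the usual statement that Dwyer--Kan equivalences are homotopy cofinal in combinatorial simplicial model categories; it can be proved by combining the existence of the projective model structure (guaranteed by combinatoriality) with the fact that DK equivalences are precisely the weak equivalences in the Bergner model structure on simplicial categories, or alternatively extracted directly from the presentable $\infty$-category yoga in \cite{luriehigher}.

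Applying this with $D=BG_c$ and $i=i_c$ (a DK equivalence by hypothesis) yields a chain of weak equivalences
\[
F(c)^{h\Map_C(c,c)} \;\simeq\; \on{holim}_{BG_c}(F\circ i_c)\;\simeq\;\on{holim}_C F,
\]
and since the rightmost object is manifestly independent of the choice of $c$, any two of the objects $F(c)^{h\Map_C(c,c)}$ are weakly equivalent. The only real subtlety is the invariance-under-DK-equivalence statement in the second paragraph; once that is granted, the rest is formal.
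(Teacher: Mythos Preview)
Your proof is correct and follows essentially the same strategy as the paper's: both identify each $F(c)^{h\Map_C(c,c)}$ with $\operatorname{holim}_C F$ via the Dwyer--Kan equivalence $i_c$, using that such equivalences induce Quillen equivalences on diagram categories. The only cosmetic difference is that the paper works with the injective model structure and passes through the derived unit of the adjunction $(i_c^*,(i_c)_*)$ (citing \cite[Proposition A.3.3.8]{luriehighertopos}), whereas you phrase the same invariance in terms of projective model structures or homotopy cofinality.
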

		
		\begin{proof}
		Let $c$ be an object of $C$. We have an inclusion $i_c:\Map_C(c,c)\to C$. By hypothesis, the map $i_c$ is an equivalence of simplicial categories, therefore, the adjunction 
		\[i_c^*:\cat{M}^C\leftrightarrows \cat{M}^{\Map_C(c,c)}:(i_c)_*\]
		is a Quillen equivalence of model categories (where both sides are given the injective model structure) by \cite[Proposition A.3.3.8]{luriehighertopos}. It follows that for any $F$ in $\cat{M}^{C}$ the derived unit map $F\to (\R i_c)_*i_c^*F$ is a weak equivalence. We can apply the functor $\on{holim}_C$ to this weak equivalence and we get a weak equivalence
		\[\on{holim}_CF\to \on{holim}_C (\R i_c)_*i_c^*F\]
		and the right-hand side can be identified with $\on{holim}_{\Map_C(c,c)}i_c^*F:=F(c)^{h\Map_C(c,c)}$. Therefore, all the objects $F(c)^{h\Map_C(c,c)}$ are weakly equivalent to $\on{holim}_CF$. 
		\end{proof}
		
		\begin{prop}
		The operad $\og^W$ is quasi-isomorphic to $\og^{W'}$.
		\end{prop}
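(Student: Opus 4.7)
The plan is to construct a zigzag of quasi-isomorphisms
\[\og^W \;\longleftarrow\; \og^W_1 \;\longrightarrow\; \og^W_2 \;\longrightarrow\; \og^{W'}\]
passing through two intermediate operads $\og^W_1 := NS_\bullet(B\opab)^{hB\Z}$ and $\og^W_2 := NS_\bullet(B\opab_\Q)^{hB\Z}$, where in the definition of $\og^W_2$ the simplicial Hopf algebra $S_\bullet(B\Z)$ acts on $S_\bullet(B\opab_\Q)$ via restriction along the map $B\Z \to B\Q$ induced by $\Z \hookrightarrow \Q$. Lemma \ref{lemm: invariance of fixed points} delivers the first two equivalences, while the last equivalence reflects the fact that homotopy fixed points are invariant under a quasi-isomorphism of the acting simplicial Hopf algebra.

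For $\og^W \simeq \og^W_1$, the key input is Tamarkin's weak equivalence of operads in simplicial sets $B\opab \simeq \on{Sing}(\od)$ from \cite[Section 3.2]{tamarkinformality}, which intertwines the $B\Z$-action coming from \cite[III 5.2]{fressehomotopy2} with the $S^1$-action on $\od$. After applying the strong monoidal functor $S_\bullet$ one obtains the required $S_\bullet(B\Z) \simeq S_\bullet(S^1)$-equivariant quasi-isomorphism. One then assembles these data into a simplicial category with two objects, whose self-mapping monoids are respectively $S^1$ and $B\Z$ and whose morphism spaces witness the Tamarkin equivalence, and applies Lemma \ref{lemm: invariance of fixed points} in the category $\cat{M}$ of operads in simplicial vector spaces.

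For $\og^W_1 \simeq \og^W_2$, the prounipotent completion morphism $\opab \to \opab_\Q$ is equivariant with respect to $\Z \to \Q$, and the arity-wise map $B\opab(n) \to B\opab_\Q(n)$ is a rational equivalence, since the pure braid groups that appear as the vertex groups of $\opab$ are $1$-formal and hence their rational group cohomology coincides with that of their Mal'cev completion. After applying $S_\bullet$ one obtains an $S_\bullet(B\Z)$-equivariant quasi-isomorphism of operads in simplicial vector spaces, and a second application of Lemma \ref{lemm: invariance of fixed points} yields the equivalence.

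Finally, for $\og^W_2 \simeq \og^{W'}$, the map $S_\bullet(B\Z) \to S_\bullet(B\Q)$ is a quasi-isomorphism of simplicial Hopf algebras (both compute the exterior algebra $\Lambda[\Delta]$ on a degree-one generator). The induced map of cosimplicial simplicial vector spaces from Construction \ref{cons: fixed points} is then a levelwise quasi-isomorphism, because $\Hom_\Q$ preserves quasi-isomorphisms in both variables over a field, and the totalizations therefore remain quasi-isomorphic. The main hurdle I anticipate is making the rational equivalence $B\opab \to B\opab_\Q$ rigorous at the operadic level rather than merely arity-wise: the $1$-formality of the individual pure braid groups has to be assembled into a compatible statement within the prounipotent completion formalism for operads in groupoids developed in \cite[Chapter 9]{fressehomotopy1}, which requires some bookkeeping on the connected components of each groupoid $\opab(n)$.
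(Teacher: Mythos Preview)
Your zigzag is exactly the one the paper uses, and your treatments of the maps $S_\bullet(B\opab)^{hB\Z}\to S_\bullet(B\opab_\Q)^{hB\Z}$ and $S_\bullet(B\opab_\Q)^{hB\Z}\to S_\bullet(B\opab_\Q)^{hB\Q}$ match the paper's (except that for the first of these the paper does not invoke Lemma~\ref{lemm: invariance of fixed points}: there is an honest $S_\bullet(B\Z)$-equivariant map of operads, so one only needs that $(-)^{hB\Z}$ preserves weak equivalences). The ``hurdle'' you flag at the end is not one: completion $\opab\to\opab_\Q$ is by construction a map of operads in groupoids, so the comparison is operadic from the start, and the aritywise rational equivalence is a standard property of Mal'cev completion.

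The genuine gap is in your first step, the comparison $S_\bullet(\od)^{hS^1}\simeq S_\bullet(B\opab)^{hB\Z}$. You propose to ``assemble these data into a simplicial category with two objects, whose self-mapping monoids are respectively $S^1$ and $B\Z$ and whose morphism spaces witness the Tamarkin equivalence'', and to apply Lemma~\ref{lemm: invariance of fixed points}. But this assembly is precisely the nontrivial point. The Tamarkin comparison is a zig-zag of weak equivalences of simplicial operads, not a single map, and you have not justified that it can be made equivariant for the $B\Z$- and $S^1$-actions in any sense strong enough to produce a strict simplicial category with the required compositions. Declaring the endomorphism monoids to be $S^1$ and $B\Z$ does not by itself give you compatible off-diagonal mapping spaces and associative composition.

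The paper resolves this differently: it takes cofibrant--fibrant models for $\od$ and $B\opab$ in simplicial operads, lets $C$ be the simplicial subcategory of simplicial operads spanned by these two objects and the relevant connected components of their derived mapping spaces, and then applies Lemma~\ref{lemm: invariance of fixed points} to the tautological functor $C\to\mathbf{M}$. The crucial input is then \cite[Theorem~8.5]{horelprofinite}, which says that the inclusions $B\Z\hookrightarrow\Map_C(B\opab,B\opab)$ and $S^1\hookrightarrow\Map_C(\od,\od)$ are weak equivalences of simplicial monoids. That theorem is what replaces your unjustified ``intertwining'' claim; without it (or an equivalent statement), your first step does not go through.
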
 
		
		\begin{proof}
		First, the map $B\opab\to B\opab_\Q$ induces a weak equivalence on rational homology. Moreover it is $B\Z$-equivariant (where $\Z$ acts on the target through the inclusion $\Z\to\Q$). Hence it induces a weak equivalence of operads
		\[S_\bullet(B\opab)^{hB\Z}\to S_\bullet(B\opab_\Q)^{hB\Z}\]
		
		The inclusion $\Z\to \Q$ induces a map 
		\[S_\bullet(\opab_\Q)^{hB\Z}\to S_\bullet(\opab_\Q)^{hB\Q}\]
		which is also a weak equivalence since the map of Hopf algebras $S_\bullet(B\Z)\to S_\bullet(B\Q)$ is a weak equivalence.

		Hence, it is enough to prove that $S_\bullet(B\opab)^{hB\Z}$ is equivalent to $S_\bullet(\od)^{hS^1}$ as an operad in simplicial vector spaces. In order to simplify the notations, we write $\mathscr{B}$ for the operad $B\opab$. We may assume without loss of generality that $\mathscr{B}$ and $\od $ are cofibrant-fibrant objects in simplicial operads. Thus, there exists a weak equivalence $\alpha:\mathscr{B}\to\od$ and a homotopy inverse $\beta:\od\to\mathscr{B}$. We denote by $C$ the simplicial subcategory of the category of simplicial operads containing the two objects $\mathscr{B}$ and $\od$ and the connected components of the map $\id_{\mathscr{B}}$, $\id_{\od}$, $\alpha$, $\beta$. The simplicial category $C$ has the property that for any object $c\in C$, the inclusion $\Map_C(c,c)\to C$ is a weak equivalence of simplcial categories. There is a simplicial functor from $C$ to operads in simplicial vector spaces sending $\mathscr{B}$ to $S_\bullet(\mathscr{B})$ and $\od$ to $S_\bullet(\od)$. Hence according to Lemma \ref{lemm: invariance of fixed points}, there is a zig-zag of weak equivalences:
		\[S_\bullet(\mathscr{B})^{h\Map_{C}(\mathscr{B},\mathscr{B})}\xleftarrow{\sim}*\xrightarrow{\sim}S_\bullet(\od)^{h\Map_{C}(\od,\od)}\]
		Finally since the inclusions $B\Z\to\Map_C(\mathscr{B},\mathscr{B})$ and $S^1\to\Map_C(\od,\od)$ are weak equivalences of monoids by \cite[Theorem 8.5]{horelprofinite}, the left-hand side of this zig-zag is weakly equivalent to $S_\bullet(\mathscr{B})^{hB\Z}$ and the right-hand side of this zig-zag is weakly equivalent to $S_\bullet(\od)^{hS^1}$.
		\end{proof}
		
		\subsection{Formality}

		Given an operad $\mathscr{P}$ (or any other algebraic structure) in graded vector spaces over $\Q$ and an element $r\in\Q^\times$, we get an automorphism $\alpha_r$ of $\mathscr{P}$ via the formula 
		\[\alpha_r(x):=r^{|x|}x.\]
		Such automorphisms are called grading automorphisms. Note that we have the formula $\alpha_r\circ\alpha_s=\alpha_{rs}$. Hence, the operad $\mathscr{P}$ has an action of the group $\Q^\times$ 
		
		\begin{defi} 
		This action of $\Q^\times$ on operads in graded vector spaces is called the grading action. More generally, an action of $\GT(\Q)$ on an operad $\mathscr{P}$ in graded vector spaces is said to be the grading action if it is given by the composition
		\[\GT(\Q)\xrightarrow{\chi} \Q^\times\to \Aut(\mathscr{P})\]
		where the second map is the grading action.
		\end{defi}
		
		\begin{prop}\label{prop: grading action}
		The action of $\GT(\Q)$ on $H_*(\og^W)$ is the grading action.
		\end{prop}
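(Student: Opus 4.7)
The plan is to reduce the statement to the analogous assertion for the Gerstenhaber operad $\ohd = H_*(\od)$. By the previous proposition, $\og^W$ is quasi-isomorphic to $\og^{W'}$, so the $\GT(\Q)$-action on $H_*(\og^W)$ is identified with the one on $H_*(\og^{W'})$. The map $\og^{W'}\to C_*(B\opab_\Q)$ is $\GT(\Q)$-equivariant by construction, and on homology it realizes the inclusion $\on{Grav}^W \hookrightarrow \ohd$ described in the previous subsection (as the kernel of $\Delta$). Since the grading action on $\on{Grav}^W$ is the restriction of the grading action on $\ohd$, I would reduce to proving that the $\GT(\Q)$-action on the Gerstenhaber operad factors through $\chi$ as the grading action.

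For this step I would use that $\ohd$ is generated as an operad by two arity-$2$ classes: the product $m\in\ohd(2)_0$ and the bracket $b\in\ohd(2)_1$. Under the grading action with parameter $r$, $m$ is fixed and $b$ is scaled by $r$, so it suffices to check the analogous formulas with $r=\chi(g)$ for $g\in\GT(\Q)$. The statement for $m$ is automatic: $H_0(\od(2))$ is one-dimensional and any operad automorphism preserves the class of a point.

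The substantive step concerns $b$. Since $\od(2)\simeq S^1$ we have $H_1(\od(2))\cong\Q$, and via the Hurewicz map $b$ corresponds to the generator $\tau$ of $\pi_1(\od(2))\cong\Z$ (a half-twist). The cyclotomic character is essentially defined by the requirement that $g\in\GT(\Q)$ act on the prounipotent completion of $\tau$ by multiplication by $\chi(g)$; this is precisely the same input \cite[Proposition III.5.2.4]{fressehomotopy2} already invoked to establish the equivariance of (\ref{eqn: action map is equivariant}). Passing to the rational abelianization yields $g\cdot b=\chi(g)\,b$, which combined with the operadic generation by $m$ and $b$ forces $g$ to act on any degree $k$ class by multiplication by $\chi(g)^k$, finishing the reduction.

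The main obstacle is a bookkeeping check: that the identifications between $b$ and $\tau$, and between $\pi_1(\od(2))$ and $\pi_1(B\opab(2))$ transferred across the weak equivalence $B\opab\simeq\on{Sing}(\od)$, are compatible with the $\GT(\Q)$-actions on the prounipotent completions. This is however built into the naturality of Fresse's constructions, so I expect no fundamentally new ideas to be required beyond carefully unpacking the definition of $\chi$.
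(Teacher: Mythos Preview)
Your approach coincides with the paper's: both embed $H_*(\og^W)$ into $\ohd=H_*(\od)$ via the $\GT(\Q)$-equivariant map induced by $\iota$ and then invoke the fact that the $\GT(\Q)$-action on $\ohd$ is the grading action. The only difference is that the paper cites \cite{petersenformality} for this last step, whereas you reprove it from the operadic generators $m$ and $b$---your argument for $b$ via the cyclotomic character is exactly the relevant one and is essentially Petersen's.
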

		
		\begin{proof}
		As we explained at the end of section 2.2, the map
		\[\iota(n):\og^W(n)\to C_*(\od(n))\]
		induces the inclusion $\ker(\Delta)\to H_*(\od(n))$ on homology groups. Since $\GT(\Q)$ acts on $\od(n)$ in a way compatible with the $S^1$-action, the map $H_*(\iota(n))$ is $\GT(\Q)$-equivariant. As explained in \cite{petersenformality}, the action of $\GT(\Q)$ on $H_*(\od(n))$ is the grading action; it follows that the action on $H_*(\og^W(n))$ is also the grading action.
		\end{proof}
		
		We can now prove the main result of this section.
		
		\begin{theo}
		The operad $\og^W$ is formal.
		\end{theo}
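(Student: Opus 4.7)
The approach is to reduce to proving that $\og^{W'}$ is formal—this suffices since formality is invariant under quasi-isomorphism and the previous proposition gives $\og^W\simeq\og^{W'}$—and then to apply a Sullivan-style criterion using the $\GT(\Q)$-action constructed in the previous subsection.

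The decisive step is to exploit the surjectivity of the cyclotomic character $\chi:\GT(\Q)\to\Q^\times$: pick $r\in\Q^\times$ of infinite order (for instance $r=2$) and choose a lift $\sigma\in\GT(\Q)$ with $\chi(\sigma)=r$. Then $\sigma$ acts on the operad $\og^{W'}$ by chain-operad automorphisms, and by Proposition~\ref{prop: grading action} the induced automorphism of $H_*(\og^{W'})$ is the grading automorphism $\alpha_r$, i.e.\ multiplication by $r^k$ in homological degree $k$. I would then invoke the operadic Sullivan criterion, in the form used by Petersen in \cite{petersenformality} to prove formality of $C_*(\od)$: a dg-operad $P$ over $\Q$ admitting a chain-operad automorphism whose induced action on homology is a grading automorphism $\alpha_r$ with $r\in\Q^\times$ of infinite order is formal. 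The mechanism is to decompose each $P(n)$ into generalized $\sigma$-eigenspaces and to extract, in each homological degree $k$, the generalized $r^k$-eigenspace; this yields a subcomplex which is stable under the differential and under operadic composition, has vanishing differential (since the differential lowers homological degree by one but preserves the eigenspace decomposition, while the distinct eigenvalues $r^k$ and $r^{k-1}$ force incompatibility), and is quasi-isomorphic to $P(n)$, giving an explicit suboperad isomorphic to $H_*(P)$.

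I expect the main obstacle to lie in the careful packaging of the generalized-eigenspace decomposition: the chain complexes $\og^{W'}(n)$ are not finite-dimensional in each degree, and one must verify that the decomposition into (generalized) $\sigma$-eigenspaces is well-defined and compatible with the operadic composition. Compatibility with composition is immediate from the fact that $\sigma$ acts by operad automorphisms; the subtler point—existence of a well-behaved eigenspace decomposition under a mere finite-type hypothesis on homology—is precisely what is handled in Petersen's treatment of the analogous problem for the little $2$-disks operad, and that argument transports verbatim once the $\GT(\Q)$-equivariant chain model $\og^{W'}$ is in hand.
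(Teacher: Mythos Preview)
Your proposal is correct and follows essentially the same route as the paper: reduce to $\og^{W'}$ via the quasi-isomorphism, use surjectivity of the cyclotomic character together with Proposition~\ref{prop: grading action} to lift a grading automorphism of homology to a chain-level automorphism, and then invoke a Sullivan-type criterion. The only difference is bibliographic and expository: the paper cites \cite[Theorem 5.2.3]{guillenmoduli} directly for the criterion rather than Petersen's paper (which itself relies on that result), and does not spell out the generalized-eigenspace mechanism or the finite-type subtlety you flag.
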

		
		\begin{proof}
		It is equivalent to prove that $\og^{W'}$ is formal. According to \cite[Theorem 5.2.3]{guillenmoduli}, it suffices to prove that a grading automorphism of $H_*(\og^{W'})$ lifts to an automorphisms of $\og^{W'}$. This follows immediately from the surjectivity of the cyclotomic character $\GT(\Q)\to\Q^\times$.
		\end{proof}
		
		\begin{rem}
		We conclude this section with a remark which connects this proof of formality to the one of the next section. The group $\GT$ receives a map from the group $\on{Gal}(\cat{MT}(\mathbb{Z}))$, the Galois group of the Tannakian category of mixed Tate motives over $\mathbb{Z}$ (see \cite[25.9.2.2]{andrebook}). By restricting along this map, the operad $\og^{W'}$ can be viewed as an operad in mixed Tate motives over $\mathbb{Z}$. As such it has a Hodge realization, which is an operad in the category of chain complexes in mixed Hodge structures. In this framework, the analog of Proposition \ref{prop: grading action} means that the induced mixed Hodge structure on homology is pure of weight $-2k$ in homological degree $k$ (see Remark \ref{rem: tate twist} below). Thus, our proof of formality can be reinterpreted in that light as a ``purity implies formality'' type of result. We refer the reader to \cite[Section 7.4]{ciricimixed} for more details about this.
		\end{rem}

\section{The Getzler--Kapranov model}

	\subsection{Definition}

		We recall the construction of \cite[\S 6.10]{getzlerkapranovmodular} in the genus zero case. Let $\mod_{0,n+1}$ denote the moduli space of genus zero curves with $n+1$ marked points and let $\overline{\mod}_{0,n+1}$ denote its Deligne--Mumford compactification. The complement $\partial\overline{\mod}_{0,n+1}:=\overline{\mod}_{0,n+1}-\mod_{0,n+1}$ is a simple normal crossing divisor which induces a stratification of $\overline{\mod}_{0,n+1}$ indexed by the poset of $n$-trees. One associates to integers $r$, $s$ such that $r+s=n+1$, and an integer $i\in\{1,\ldots,r\}$, a $n$-tree $t(r,s,i)$ with one internal edge obtained by grafting a $s$-corolla on the $i$-th leaf of a $r$-corolla. Figure \ref{fig: tree} shows the case $r=6$, $s=3$, $i=3$. This $n$-tree corresponds to an irreducible component of the divisor $\partial\overline{\mod}_{0,n+1}$, isomorphic to $\overline{\mod}_{0,r+1}\times\overline{\mod}_{0,s+1}$
		
		\begin{figure}[h]
		
		\begin{tikzpicture}
			\node {} [grow'=up, level distance=4mm, level/.style={sibling distance=12mm/#1}]
			child{[level distance=12mm]
				child{ node {$1$}}
				child{ node {$2$}}
				child { 
					child{ node {$3$}}
					child{ node {$4$}}
					child{ node {$5$}}
				}
				child{ node {$6$}}
				child{ node {$7$}}
				child{ node {$8$}}
			};
		\end{tikzpicture}
		
		\caption{The tree $t(6,3,3)$}\label{fig: tree}
		\end{figure}

		We denote by 
		$$E^*(\overline{\mod}_{0,n+1},\log\partial\overline{\mod}_{0,n+1})$$
		the space of global smooth differential forms on $\overline{\mod}_{0,n+1}$ with logarithmic singularities along $\partial\overline{\mod}_{0,n+1}$. The residue morphism along the divisor $\overline{\mod}_{0,r+1}\times\overline{\mod}_{0,s+1}$ indexed by the tree $t(r,s,i)$ reads
		\begin{equation}\label{eqn: residue E}
		E^{*+1}(\overline{\mod}_{0,n+1},\log\partial\overline{\mod}_{0,n+1}) \rightarrow E^{*}(\overline{\mod}_{0,r+1}\times \overline{\mod}_{0,s+1},\log (\partial\overline{\mod}_{0,r+1}\times\overline{\mod}_{0,s+1} \cup \overline{\mod}_{0,r+1}\times\partial\overline{\mod}_{0,s+1}))\ .
		\end{equation}
		
		We now view the spaces of differential forms as nuclear Fr\'{e}chet spaces. Recall \cite[Proposition 3.0.6]{costello} that the category of nuclear Fréchet spaces, endowed with the completed tensor product $\widehat{\otimes}$, is symmetric monoidal. The right-hand side of (\ref{eqn: residue E}) is then naturally isomorphic to the tensor product
		$$E^*(\overline{\mod}_{0,r+1},\log\partial\overline{\mod}_{0,r+1}) \;\widehat{\otimes}\; E^*(\overline{\mod}_{0,s+1},\log\partial\overline{\mod}_{0,s+1})\ .$$
		For $V$ a nuclear Fréchet space, its strong dual $V'$ is a nuclear DF-space and this operation establishes an anti-equivalence of symmetric monoidal categories between the category of nuclear Fréchet spaces and that of nuclear DF-spaces \cite[Proposition 3.0.6]{costello}. By dualizing (\ref{eqn: residue E}) and suspending we thus get morphisms
		\begin{equation}\label{eqn: operadic composition E}
		E^{*-1}(\overline{\mod}_{0,r+1},\log\partial\overline{\mod}_{0,r+1})' \;\widehat{\otimes}\; E^{*-1}(\overline{\mod}_{0,s+1},\log\partial\overline{\mod}_{0,s+1})' \rightarrow E^{*-1}(\overline{\mod}_{0,n+1},\log\partial\overline{\mod}_{0,n+1})'\ .
		\end{equation}
		
		\begin{defi} 
		The \emph{Getzler--Kapranov chain model $\og^{GK}$ of the gravity operad}  is the differential graded operad in DF-spaces whose arity $n$ component is 
		$$\og^{GK}(n):=E^{*-1}(\overline{\mod}_{0,n+1},\log\partial\overline{\mod}_{0,n+1})'$$
		and whose composition morphisms $\circ_i$ are the morphisms (\ref{eqn: operadic composition E}).
		\end{defi}
		
		\begin{rem}
		These operads have the structure of anticyclic operads \cite[2.10]{getzlerkapranovcyclic}. This point of view has the advantage of making more explicit the signs that appear in the definition of the composition morphisms.
		\end{rem}
		
		Let us mention that the inclusion of $E^*(\overline{\mod}_{0,n+1},\log\partial\overline{\mod}_{0,n+1})$ inside the differential graded algebra of smooth differential forms on $\mod_{0,n+1}$ is a quasi-isomorphism. This implies that the homology of $\og^{GK}$ has arity $n$ component
		$$H_*(\og^{GK}(n))\cong H_{*-1}(\mod_{0,n+1})\otimes_\Q\C\ .$$
		It is a standard fact that the residue morphisms are defined on the cohomology with rational coefficients (this follows for instance from Lemma \ref{lemm: residues blow-ups}); thus, there is a natural rational structure on the homology of $\og^{GK}$, that we denote by $\on{Grav}^{GK}$. This is an operad in rational graded vector spaces whose arity $n$ component is $$\on{Grav}^{GK}(n)=H_{*-1}(\mod_{0,n+1})\ .$$
		It is nothing but (the operadic desuspension of) the operad defined by Getzler in \cite[\S 3.4]{getzleroperadsmodulispaces}.
		
		\begin{rem}\label{rem: tate twist}
		The Getzler--Kapranov gravity operad $\on{Grav}^{GK}$ has a natural structure of an operad in the category of mixed Hodge structures if one adds the right Tate twist and sets
		$$\on{Grav}^{GK}(n)=H_{*-1}(\mod_{0,n+1})\otimes\mathbb{Q}(1)\ .$$
		The Tate twist $\mathbb{Q}(1)$ has the effect of shifting the weight filtration by $-2$. By \cite[Lemma 3.12]{getzleroperadsmodulispaces}, the mixed Hodge structure on the $k$-th cohomology group of $\mod_{0,n+1}$ is pure Tate of weight $2k$, which implies that the mixed Hodge structure on the degree $k$ part of $\on{Grav}^{GK}$ is pure Tate of weight $-2(k-1)-2=-2k$. From a more concrete point of view, the Tate twist comes from the factor $2\pi i$ in the definition of a residue morphism.
		\end{rem}
	
	\subsection{Formality}
	
		We start with a general proposition. Let $X$ be a smooth complex variety and $D$ be a simple normal crossing divisor in $X$. Then we have the space $E^*(X,\log D)$ of global smooth differential forms on $X$ with logarithmic singularities along $D$, and the subspace $\Omega^*(X,\log D)$ of global holomorphic differential forms on $X$ with logarithmic singularities. The following proposition seems to be folklore, and is explained in, e.g., \cite[\S 1.6]{almpetersen}.
	
		\begin{prop}\label{prop: general formality}
		\begin{enumerate}
		\item If $X$ is projective then every global holomorphic logarithmic differential form is closed, i.e., the differential in $\Omega^*(X,\log D)$ is zero.
		\item If, furthermore, for every $k$ the mixed Hodge structure on $H^k(X-D)$ is pure of weight $2k$, then the inclusion
		$$(\Omega^*(X,\log D),d=0) \hookrightarrow (E^*(X,\log D),d)$$
		is a quasi-isomorphism of differential graded algebras.
		\end{enumerate}
		\end{prop}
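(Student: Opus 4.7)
The plan is to read both statements off the Hodge--de Rham spectral sequence for the pair $(X, D)$. The key input from Deligne is that, since $X$ is smooth projective and $D$ is a simple normal crossing divisor, the sheaf complex $\Omega^*(X, \log D)$ is a resolution of $Rj_* \mathbb{C}_{X-D}$, where $j\colon X-D \hookrightarrow X$ denotes the open inclusion, so $\mathbb{H}^*(X, \Omega^*(X, \log D)) \cong H^*(X-D, \mathbb{C})$; moreover the associated hypercohomology spectral sequence
\[
E_1^{p,q} = H^q(X, \Omega^p(X, \log D)) \;\Longrightarrow\; H^{p+q}(X - D, \mathbb{C})
\]
degenerates at $E_1$, with the induced filtration on the abutment being the Hodge filtration; hence $\mathrm{Gr}^p_F H^{p+q}(X - D, \mathbb{C}) \cong H^q(X, \Omega^p(X, \log D))$.

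For (1), I would observe that the $d_1$-differential on the $q=0$ row of this spectral sequence is exactly the holomorphic exterior derivative $H^0(X, \Omega^p(X, \log D)) \to H^0(X, \Omega^{p+1}(X, \log D))$, i.e.\ the differential in the complex of global holomorphic logarithmic forms. Degeneration at $E_1$ forces this differential to vanish, which is precisely the content of (1).

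For (2), the first step is to upgrade the $E_1$-degeneration, using the purity hypothesis, into the vanishing $H^q(X, \Omega^p(X, \log D)) = 0$ for all $q > 0$. Indeed, purity of weight $2k$ on $H^k(X - D, \mathbb{C})$ forces $\mathrm{Gr}^p_F H^k = 0$ for $p \neq k$ (on a $k$-th cohomology group, the only Hodge bi-type of weight $2k$ available is $(k,k)$, because the Hodge numbers of a smooth open variety satisfy $p,q \leq k$), and the identification above translates this into the desired vanishing of higher coherent cohomology. Granting this, the hypercohomology of $\Omega^*(X, \log D)$ is computed by the complex of global sections, whose differential is zero by (1), so $H^n(X - D, \mathbb{C}) \cong H^0(X, \Omega^n(X, \log D))$. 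On the other hand, the sheaves $\mathcal{E}^p(X, \log D)$ are fine, so the hypercohomology of $\mathcal{E}^*(X, \log D)$ coincides with the cohomology of its complex of global sections $(E^*(X, \log D), d)$. The inclusion of sheaf complexes $\Omega^*(X, \log D) \hookrightarrow \mathcal{E}^*(X, \log D)$ is a quasi-isomorphism (both resolve $Rj_* \mathbb{C}$), and by naturality of the hypercohomology identifications, the induced map on global sections realizes the isomorphism $H^n(\Omega^*(X, \log D)) \cong H^n(E^*(X, \log D))$; this is the desired quasi-isomorphism of cdgas. The main step requiring genuine input is the vanishing of higher coherent cohomology deduced from purity; the rest is a formal manipulation of hypercohomology spectral sequences once Deligne's $E_1$-degeneration and the identification of the induced filtration with the Hodge filtration are granted.
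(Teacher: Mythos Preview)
Your proof is correct and follows essentially the same route as the paper: both arguments invoke Deligne's degeneration at $E_1$ of the hypercohomology spectral sequence for $\Omega^*_X(\log D)$ to obtain (1) from $d_1^{p,0}=0$, and then use the purity hypothesis to force $H^q(X,\Omega^p_X(\log D))=0$ for $q>0$, so that both $\Omega^*_X(\log D)$ and $E^*_X(\log D)$ consist of acyclic sheaves and the sheaf-level quasi-isomorphism passes to global sections. Your justification of the vanishing step (only the Hodge type $(k,k)$ survives in weight $2k$ because $p,q\leq k$ on $H^k$ of a smooth variety) makes explicit what the paper leaves implicit.
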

		
		\begin{proof}
		Let us denote by $E^*_X(\log D)$ (resp $\Omega^*_X(\log D)$) the complex of sheaves on $X$ of smooth (resp. holomorphic) differential forms with logarithmic sigularities along $D$, whose space of global sections is $E^*(X,\log D)$ (resp. $\Omega^*(X,\log D)$). The inclusion 
		\begin{equation}\label{eqn: quasi-iso Omega E}
		\Omega^*_X(\log D)\hookrightarrow E^*_X(\log D)
		\end{equation}
		is a quasi-isomorphism of complexes of sheaves on $X$ \cite[3.2.3 b)]{delignehodge2}.
		\begin{enumerate}
		\item By \cite[Corollaire 3.2.13 (ii)]{delignehodge2}, the hypercohomology spectral sequence for the stupid truncation filtration on $\Omega^*_X(\log D)$ degenerates at $E_1$. The $E_1$ term is $E_1^{p,q}=H^q(X,\Omega^p_X(\log D))$ and the differential $d_1^{p,q}$ is induced by the exterior differential on differential forms. Thus, the degeneration of this spectral sequence implies in particular that $d_1^{p,0}=0$, which implies the claim.
		\item Again by the degeneration of the spectral sequence, we have 
		\[E_1^{p,q}=H^q(X,\Omega^p_X(\log D)) \cong \mathrm{gr}^p_FH^{p+q}(X-D)\otimes_\Q\C.\]
		By the purity assumption, this is zero for $q>0$. Thus, the sheaves $\Omega^p_X(\log D)$ are acyclic. This is also true for the (soft) sheaves $E^p_X(\log D)$; thus, taking global sections of (\ref{eqn: quasi-iso Omega E}) leads to the desired quasi-isomorphism.
		\end{enumerate}
		\end{proof}

		We note that under the assumptions of Proposition \ref{prop: general formality} (2), the complement $X-D$ is formal in the sense of rational homotopy theory, i.e., its differential graded algebra of smooth differential forms $(E^*(X-D),d)$ is formal. This is because the inclusion $(E^*(X,\log D),d)\hookrightarrow (E^*(X-D),d)$ is a quasi-isomorphism of differential graded algebras. This applies in particular to $X=\overline{\mod}_{0,n+1}$ and $D=\partial\overline{\mod}_{0,n+1}$ since the complement $\mod_{0,n+1}$ satisfies the purity assumption \cite[Lemma 3.12]{getzleroperadsmodulispaces}. In this case Proposition \ref{prop: general formality} also implies the following \emph{operadic} formality result, which appears in \cite[\S 6.10]{getzlerkapranovmodular} and \cite[\S 1.6]{almpetersen}.
		
		\begin{theo}
		The operad $\og^{GK}$ is formal.
		\end{theo}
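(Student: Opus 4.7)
The plan is to apply Proposition \ref{prop: general formality} arity-wise, using the sub-cooperad of holomorphic logarithmic forms. More precisely, $\og^{GK}$ is obtained by dualizing the cooperad $\mathscr{E}$ in nuclear Fréchet spaces with $\mathscr{E}(n) = E^{*-1}(\overline{\mod}_{0,n+1},\log\partial\overline{\mod}_{0,n+1})$ and cocomposition given by the residue morphisms (\ref{eqn: residue E}). If I can exhibit a sub-cooperad with trivial differential and inclusion that is an arity-wise quasi-isomorphism, then dualizing in the anti-equivalence between nuclear Fréchet and nuclear DF-spaces will produce the desired zig-zag between $\og^{GK}$ and its homology.

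The candidate is the sub-cooperad $\mathscr{O}$ with $\mathscr{O}(n)=\Omega^{*-1}(\overline{\mod}_{0,n+1},\log\partial\overline{\mod}_{0,n+1})$. The first step is to verify that (\ref{eqn: residue E}) restricts to $\mathscr{O}$, i.e. that Poincaré residues of holomorphic logarithmic forms along normal-crossing components are again holomorphic logarithmic forms on the divisor. In local coordinates in which a component of the boundary is cut out by $\{z_1=0\}$, a holomorphic log form decomposes as $\omega = \frac{dz_1}{z_1}\wedge\alpha+\beta$ with $\alpha,\beta$ holomorphic and log along the remaining components, and its residue is $\alpha|_{z_1=0}$, which is manifestly holomorphic and log. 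Compatibility with the completed tensor product of Fréchet spaces in the target of (\ref{eqn: residue E}) is automatic, so $\mathscr{O}$ is indeed a sub-cooperad of $\mathscr{E}$.

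The second step applies Proposition \ref{prop: general formality} to $X=\overline{\mod}_{0,n+1}$ and $D=\partial\overline{\mod}_{0,n+1}$: the projectivity hypothesis is clear, and the purity of $H^k(\mod_{0,n+1})$ needed for part (2) is \cite[Lemma 3.12]{getzleroperadsmodulispaces}, as already invoked in Remark \ref{rem: tate twist}. Hence the inclusion $(\mathscr{O}(n),0)\hookrightarrow(\mathscr{E}(n),d)$ is an arity-wise quasi-isomorphism of chain complexes of nuclear Fréchet spaces, and by the previous paragraph it is a morphism of cooperads.

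The final step is to dualize. The anti-equivalence of symmetric monoidal categories between nuclear Fréchet and nuclear DF-spaces recalled before the definition of $\og^{GK}$ turns cooperads into operads; since it is exact, it preserves quasi-isomorphisms. The inclusion $\mathscr{O}\hookrightarrow\mathscr{E}$ therefore dualizes to a surjective quasi-isomorphism of operads $\og^{GK}\twoheadrightarrow \mathscr{O}^\vee$ in nuclear DF-spaces, where $\mathscr{O}^\vee$ has zero differential and coincides as a graded operad with $H_*(\og^{GK})$ (tensored with $\mathbb{C}$, which is the field of coefficients in use). This provides the quasi-isomorphism from $\og^{GK}$ to its homology. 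The one genuinely non-formal check is the stability of holomorphic log forms under residues; everything else is an application of Proposition \ref{prop: general formality} together with the exactness of strong duality for nuclear Fréchet spaces.
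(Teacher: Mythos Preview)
Your proof is correct and follows essentially the same route as the paper: apply Proposition~\ref{prop: general formality} arity-wise (using projectivity of $\overline{\mod}_{0,n+1}$ and purity from \cite[Lemma 3.12]{getzleroperadsmodulispaces}) to get that the holomorphic log forms sit inside the smooth log forms as a sub-cooperad with zero differential and quasi-isomorphic inclusion, then dualize. Your treatment is somewhat more explicit than the paper's---you spell out the local-coordinate check that residues preserve holomorphic log forms and invoke exactness of the Fr\'echet/DF duality to justify that quasi-isomorphisms survive dualization---but the argument is the same.
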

		
		\begin{proof}
		By Proposition \ref{prop: general formality} the inclusion
		$$(\Omega^*(\overline{\mod}_{0,n+1},\log \partial\overline{\mod}_{0,n+1}),d=0) \hookrightarrow (E^*(\overline{\mod}_{0,n+1},\log \partial\overline{\mod}_{0,n+1}),d)$$
		is a quasi-isomorphism and induces an isomorphism $\Omega^*(\overline{\mod}_{0,n+1},\log \partial\overline{\mod}_{0,n+1}) \cong H^*(\mod_{0,n+1})\otimes_\Q\C$. This inclusion is compatible with the residue morphisms since the residue of a holomorphic logarithmic form is holomorphic. We thus get, after dualizing and suspending, a quasi-isomorphism of operads $\og^{GK} \stackrel{\sim}{\rightarrow} \on{Grav}^{GK}\otimes_\Q\C$.
		\end{proof}
		
		\begin{rem}
		As noted in \cite{getzlerkapranovmodular}, the same argument implies that $\og^{GK}$ is formal as an anticyclic operad.
		\end{rem}

\section{Comparing the two definitions of the gravity operad}

	The missing link between the two definitions of the gravity operad that we have used is a third definition given in \cite{kimurastasheffvoronov}.

	\subsection{Models with corners}
	
		For an integer $n\geq 2$ let us denote by $C(n)=\on{Conf}(n,\R^2)/(\R^2\rtimes \R_{>0})$ the quotient of the configuration space of $n$ ordered points in $\R^2$ by translations and dilations. There is a natural $S^1$-action on $C(n)$, whose quotient map is the natural map $C(n)\rightarrow \mod_{0,n+1}$. Here we briefly explain how to construct a commutative square 
		
		$$\xymatrix{
		C(n) \; \ar@{^{(}->}[r]^-{\sim} \ar[d] & FM(n) \ar[d] \\
		\mod_{0,n+1} \ar@{^{(}->}[r]^-{\sim} & X(n)
		}$$
		where $FM(n)$ and $X(n)$ are compactifications of $C(n)$ and $\mod_{0,n+1}$ respectively which are homotopy equivalences, the top horizontal arrow is $S^1$-equivariant, and the vertical arrows are the quotient maps.\\
		
		The space $FM(n)$ is the Fulton--MacPherson compactification of $C(n)$, which was introduced in the context of operads by Getzler--Jones \cite{getzlerjones}. Let us recall that it is a manifold with corners whose interior is $C(n)$, and that it has a natural stratification indexed by the poset of $n$-trees. The stratum corresponding to a $n$-tree $t$ is denoted by $FM^0(t)$, and its closure is denoted by $FM(t)$. They have codimension the number of internal edges of $t$, and we have natural product decompositions

		\begin{equation}\label{eqn: product decompositions}
		FM^0(t)\simeq \prod_{v\in V(t)} C(|v|) \;\;\; \textnormal{ and } \;\;\; FM(t)\simeq \prod_{v\in V(t)} FM(|v|)\ ,
		\end{equation}
where $V(t)$ denotes the set of vertices of $t$, and $|v|$ denotes the number of incoming edges at a vertex $v$. The $S^1$-action on $FM(n)$ is compatible with the stratifications, and the induced action on the products \eqref{eqn: product decompositions} is the diagonal action. This shows that the quotient $X(n):=FM(n)/S^1$ has the structure of a manifold with corners, and has a stratification indexed by the poset of $n$-trees. The interior of $X(n)$ is $\mod_{0,n+1}$, and the compactification $\mod_{0,n+1}\hookrightarrow X(n)$ can alternatively be obtained from $\overline{\mod}_{0,n+1}$ by performing real blow-ups of all irreducible components of the boundary $\partial\overline{\mod}_{0,n+1}$. For instance, $X(3)$ is isomorphic to the real blow-up of $\mathbb{P}^1(\C)$ along three points. For more details, see \cite{kimurastasheffvoronov}, where $X(n)$ is denoted by $\underline{\mathcal{M}}_{n+1}$, and \cite{kontsevichbourbaki} , where it is denoted by $\overline{\mathcal{M}}_{0,n+1}^{\R}$.\\
		
		It is customary to set $C(0)=FM(0)=\varnothing$ and $C(1)=FM(1)=\{*\}$. By using the product decompositions \eqref{eqn: product decompositions}, one sees that the closed immersions $FM(t)\hookrightarrow FM(n)$ give the collection $\{FM(n)\, , \, n\geq 0\}$ the structure of a topological operad. This is a model for the little disks operad, as the following proposition shows.
		
		\begin{prop}{\cite{getzlerjones,kontsevichoperadsmotives,salvatoresummable,lambrechtsvolic}}\label{prop: qiso D FM}
		The topological operads $FM$ and $\od$ are connected by a zig-zag of weak equivalences.
		\end{prop}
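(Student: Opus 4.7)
The plan is to exhibit the desired zig-zag by means of an intermediate cofibrant operad. First I would verify that the underlying spaces are aritywise weakly equivalent. The action of the contractible group $\R^2\rtimes\R_{>0}$ on $\on{Conf}(n,\R^2)$ is free, so the quotient map $\on{Conf}(n,\R^2)\to C(n)$ is a homotopy equivalence. Moreover, the inclusion $C(n)\hookrightarrow FM(n)$ is a homotopy equivalence because $FM(n)$ is a compact manifold with corners whose interior is $C(n)$. On the other hand, the map $\od(n)\to \on{Conf}(n,\R^2)$ sending a configuration of little disks to the tuple of their centers is a classical $\Sigma_n$-equivariant homotopy equivalence. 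Hence $FM(n)$ and $\od(n)$ are aritywise weakly equivalent.

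To upgrade this to a zig-zag of operadic weak equivalences I would use the Boardman--Vogt $W$-construction of Berger--Moerdijk applied to $\od$, producing a cofibrant resolution $W\od$ together with a canonical aritywise weak equivalence $W\od\to \od$. It then suffices to construct a morphism of topological operads $W\od\to FM$ which is aritywise a weak equivalence. A point of $W\od(n)$ is a rooted $n$-tree $t$ decorated with a point of $\od(|v|)$ at each vertex $v$ and a length in $[0,\infty]$ on each internal edge. One maps such data to $FM(n)$ by a rescaling procedure: internal edges of length $0$ are collapsed by operadic composition in $\od$ (followed by the comparison $\od\simeq C\hookrightarrow FM$ on interiors), while edges of length $\infty$ produce a point of the boundary stratum $FM(t)\cong\prod_{v\in V(t)}FM(|v|)$; intermediate lengths interpolate by scaling each sub-configuration towards its center of mass.

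The main obstacle is to verify that this comparison map is well-defined, continuous and compatible with the operadic structure across the lower-dimensional strata of $FM(n)$. Concretely, one must match the tree-indexed stratification of $W\od$ with the tree-indexed stratification \eqref{eqn: product decompositions} of $FM$, which requires a careful choice of the rescaling parameter so that the limits as the edge lengths tend to $0$ or $\infty$ agree with the prescribed collapse and insertion rules. This is precisely the content of Salvatore's formalism of summable operads in \cite{salvatoresummable} and the explicit comparison of \cite{lambrechtsvolic}, based on ideas originally indicated in \cite{kontsevichoperadsmotives} and \cite{getzlerjones}. Once this map is built, the aritywise weak equivalence property follows from the first step, yielding the zig-zag $\od\xleftarrow{\sim} W\od\xrightarrow{\sim} FM$.
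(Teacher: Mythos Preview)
The paper does not supply its own proof of this proposition: it is recorded as a known fact with citations to \cite{getzlerjones,kontsevichoperadsmotives,salvatoresummable,lambrechtsvolic}, and no argument is given beyond those references. Your sketch is therefore not being compared against a proof in the paper but against the literature it points to.

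As a summary of that literature, your outline is accurate in spirit. The aritywise comparison in your first paragraph is correct and standard. For the operadic comparison, your idea of passing through a cofibrant resolution and matching the tree-indexed stratifications is exactly the mechanism used in \cite{salvatoresummable} and \cite{lambrechtsvolic}; you have also correctly isolated the genuine technical content, namely the continuity and compatibility of the rescaling map across strata. One minor comment: in Salvatore's treatment the logic runs slightly differently, in that $FM$ is itself shown to be a cofibrant operad, so one does not strictly need $W\od$ as the intermediary; but the two routes are interchangeable and your version is perfectly valid. Since the paper defers entirely to the references, your proposal is an appropriate expansion of what the paper leaves implicit.
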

		
		In the next section we explain how to get the structure of an operad on the shifted homology groups of the spaces $X(n)$.

	\subsection{The Kimura--Stasheff--Voronov operad}
	
		Let us denote by $X^0(t)$ the stratum of $X(n)$ corresponding to a rooted $n$-tree $t$, and by $X(t)$ its closure. We have natural isomorphisms:
		$$X^0(t) \simeq \left(\prod_{v\in V(t)} C(|v|)\right) / S^1 \;\;\; \textnormal{ and } \;\;\; X(t) \simeq \left(\prod_{v\in V(t)} FM(|v|)\right) / S^1$$
		where the quotients refer to the diagonal $S^1$-actions. Thus, $X(t)$ is acted upon by the topological group $(S^1)^{V(t)}/S^1$, and the quotient map is
		$$X(t) \rightarrow \prod_{v\in V(t)} X(|v|)\ .$$
		For instance, for the tree $t=t(r,s,i)$ (see Figure \ref{fig: tree}), $X(t)\hookrightarrow X(n)$ is a closed subspace of real codimension $1$ and we get a $S^1$-bundle
		\begin{equation}\label{eqn: S1 bundle X}
		X(t)\rightarrow X(r)\times X(s)\ .
		\end{equation}
		In homology, this gives rise to a transfer map
		\begin{equation}\label{eqn: composition for KSR}
		H_*(X(r))\otimes H_*(X(s)) \stackrel{\simeq}{\longrightarrow} H_*(X(r)\times X(s)) \rightarrow H_{*+1}(X(t)) \rightarrow H_{*+1}(X(n))\ ,
		\end{equation}
		where the first map is the K\"{u}nneth isomorphism, the second map is the transfer map associated to the $S^1$-bundle \eqref{eqn: S1 bundle X}, and the third map is induced by the inclusion $X(t)\hookrightarrow X(n)$.
		
		\begin{defi}
		The \emph{Kimura--Stasheff--Voronov gravity operad} $\on{Grav}^{KSV}$ is the graded operad whose arity $n$ component is 
		$$\on{Grav}^{KSV}(n):=H_{*-1}(X(n))$$ 
		and whose composition morphisms $\circ_i$ are the (suspensions of the) morphisms \eqref{eqn: composition for KSR}.
		\end{defi}
		
	\subsection{Compatibility with residues}

		We start with a general lemma. Let $X$ be a smooth complex variety, $D\subset X$ be a smooth divisor, and $\pi:Y\rightarrow X$ be the real blow-up along $D$. It is a manifold with boundary $\partial Y=\pi^{-1}(D)$. The restriction $\pi:\partial Y\rightarrow D$ is a $S^1$-bundle which is nothing but the sphere bundle of the normal bundle of $D$ inside $X$. We thus have a transfer map $H_{*-1}(D)\rightarrow H_*(\partial Y)$ in homology. We also have a map $H_*(\partial Y)\rightarrow H_*(Y)$ induced by the closed immersion $\partial Y\hookrightarrow Y$ in homology, and we note that the inclusion $Y-\partial Y\rightarrow Y$ is a homotopy equivalence, and that the restriction $\pi:Y-\partial Y\rightarrow X-D$ is an isomorphism.
		
		\begin{lemm}\label{lemm: residues blow-ups}
		The composite
		$$H_{*-1}(D) \rightarrow H_*(\partial Y) \rightarrow H_*(Y) \simeq H_*(Y-\partial Y) \simeq H_*(X-D)$$
		is dual to the residue morphism $H^*(X-D)\rightarrow H^{*-1}(D)$.
		\end{lemm}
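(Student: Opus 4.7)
The strategy is to recognise the composite of the lemma as the classical ``tube map'' in singular homology, and then to recall that this tube map is, by construction, Poincar\'e dual to the residue. Both sides of the identity are built out of the Thom isomorphism of the normal bundle of $D$ in $X$.

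First, I identify the composite $\partial Y \hookrightarrow Y \simeq Y - \partial Y \simeq X - D$ topologically. By the tubular neighborhood theorem, an open neighborhood $N$ of $D$ in $X$ is diffeomorphic, as a pair, to the total space of the normal bundle $\nu = \nu_{D/X}$, with $D$ identified with the zero section. The complement $N - D$ deformation retracts onto the unit sphere bundle $S(\nu)$, which is canonically identified with $\partial Y$, and under this identification the composite above becomes the natural chain of inclusions $S(\nu) \hookrightarrow N - D \hookrightarrow X - D$.

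Next, I recall that the residue $\on{Res}: H^k(X-D) \to H^{k-1}(D)$ is the connecting map in the Gysin long exact sequence, obtained from the cohomology long exact sequence of the pair $(X, X-D)$ together with the Thom isomorphism $H^{k+1}(X, X-D) \cong H^{k-1}(D)$ (the real rank of $\nu$ is $2$, via excision to $(N, N-D) \cong (\nu, \nu - D)$). Running the analogous construction in homology, the dual of $\on{Res}$ is the composite
$$H_{k-1}(D) \xrightarrow{\on{Thom}} H_{k+1}(X, X-D) \xrightarrow{\partial} H_k(X-D),$$
which geometrically sends a $(k-1)$-cycle $c$ in $D$ to the boundary of the disk-bundle relative cycle $\mathbb{D}(\nu)|_c$, i.e.\ to the restricted sphere bundle $S(\nu)|_c$ pushed into $X - D$.

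Finally, I verify that the composite of the lemma produces the very same $k$-cycle. By the definition of the Gysin transfer for the oriented $S^1$-bundle $\partial Y \to D$, a cycle $c$ is sent to its preimage $\pi^{-1}(c) = S(\nu)|_c \in H_k(\partial Y)$, and the subsequent inclusion into $X - D$ gives again $S(\nu)|_c$. This matches the description of the dual of the residue verbatim. The main obstacle, if there is one, is not conceptual but rather the careful bookkeeping of orientations and signs: the complex orientation of $\nu$ must be used consistently in the Thom class and in the transfer for $\partial Y \to D$, and the de Rham-theoretic residue invoked elsewhere in the paper differs from the topological one by the usual factor of $2\pi i$. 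Once these conventions are aligned, the identification is purely geometric.
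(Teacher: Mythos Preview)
Your proposal is correct and is essentially the same approach as the paper's, only more explicit: the paper's proof consists of a single sentence reducing to complex coefficients and citing the Leray residue formula from Pham's book, and what you have written is precisely an unpacking of that formula---the Leray coboundary (your ``tube map'' $c\mapsto S(\nu)|_c$) is by construction dual to the analytic residue, and you have correctly identified the composite of the lemma with that coboundary. Your closing caveat about the $2\pi i$ normalisation is exactly the passage from the topological Gysin map to the de Rham residue that the Leray formula records.
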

		
		\begin{proof}
		It is enough to do the proof in the case of homology and cohomology with complex coefficients, in which case it is a consequence of the Leray residue formula, see \cite[Theorem 2.4]{phambook}.
		\end{proof}
	
		\begin{prop}\label{prop: iso GK KSV}
		The natural isomorphism $H_{*-1}(\mod_{0,n+1}) \stackrel{\simeq}{\longrightarrow} H_{*-1}(X(n))$ induces an isomorphism of operads between the homology of the Getzler--Kapranov chain model for the gravity operad and the Kimura--Stasheff--Voronov gravity operad:
		$$\on{Grav}^{GK}\stackrel{\simeq}{\longrightarrow} \on{Grav}^{KSV}\ .$$
		\end{prop}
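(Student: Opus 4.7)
The plan is to show, for each tree $t=t(r,s,i)$ (with $r+s=n+1$), that the composition $\circ_i$ in $\ohg^{GK}$ matches that of $\ohg^{KSV}$ under the natural isomorphism $H_{*-1}(\mod_{0,n+1})\simeq H_{*-1}(X(n))$ coming from the homotopy equivalence $\mod_{0,n+1}\hookrightarrow X(n)$. By definition, the GK composition is the dual of the residue morphism along the divisor $D_t\simeq \overline{\mod}_{0,r+1}\times\overline{\mod}_{0,s+1}$ of $\overline{\mod}_{0,n+1}$, whereas the KSV composition is the transfer of the $S^1$-bundle $X(t)\to X(r)\times X(s)$ followed by $H_*(X(t))\to H_*(X(n))$. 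Both expressions concern only the single smooth divisor $D_t$ together with its sphere bundle in the real blow-up, so Lemma~\ref{lemm: residues blow-ups} is the natural tool.

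To apply that lemma I would restrict to an open subset where only the single divisor $D_t$ is present. Concretely, let $X^\circ\subset\overline{\mod}_{0,n+1}$ be the complement of the other boundary components of $\partial\overline{\mod}_{0,n+1}$ and of the codimension-$\geq 2$ strata of $D_t$. Then $X^\circ$ is smooth, $D^\circ:=D_t\cap X^\circ$ is a smooth divisor in it, and $X^\circ-D^\circ=\mod_{0,n+1}$. The inclusion $\mod_{0,r+1}\times\mod_{0,s+1}\hookrightarrow D^\circ$ is a homotopy equivalence since its complement has higher complex codimension in $D_t$. Let $Y^\circ$ denote the real blow-up of $X^\circ$ along $D^\circ$; then $Y^\circ$ embeds naturally as an open subset of the iterated real blow-up $X(n)$ whose boundary $\partial Y^\circ$ is the preimage of $\mod_{0,r+1}\times\mod_{0,s+1}$ in the stratum $X(t)$, and this preimage is a homotopy equivalent open subset of $X(t)$.

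Applying Lemma~\ref{lemm: residues blow-ups} to the pair $(X^\circ,D^\circ)$, the composite
\[
H_{*-1}(\mod_{0,r+1}\times\mod_{0,s+1}) \to H_*(\partial Y^\circ) \to H_*(Y^\circ) \simeq H_*(\mod_{0,n+1})
\]
is dual to the residue morphism entering the definition of the GK composition. Tracing through the homotopy equivalences above, this composite is precisely (the shift of) the KSV composition $\circ_i$, which therefore coincides with $\circ_i$ in $\ohg^{GK}$. Since $t$ was arbitrary, the isomorphism of graded vector spaces upgrades to an isomorphism of operads.

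The main obstacle is the geometric verification that $Y^\circ$ embeds as an open subset of $X(n)$ in such a way that the $S^1$-bundle $\partial Y^\circ\to D^\circ$ is identified with the restriction of the $S^1$-bundle $X(t)\to X(r)\times X(s)$ to $\mod_{0,r+1}\times\mod_{0,s+1}$. This is a purely local statement around a generic point of the codimension-one stratum $X^0(t)$, where the other boundary components of $\overline{\mod}_{0,n+1}$ do not intervene; it amounts to the fact that near such a point the Kimura--Stasheff--Voronov compactification $X(n)$ is exactly the real blow-up of $\overline{\mod}_{0,n+1}$ along $D_t$. Once this local identification is in place, the rest of the proof is a direct application of Lemma~\ref{lemm: residues blow-ups} together with the naturality of transfer maps under homotopy equivalences of $S^1$-bundles.
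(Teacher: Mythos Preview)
Your proposal is correct and follows essentially the same approach as the paper. Your open set $X^\circ$ is exactly the paper's $\mod_{0,n+1}^+=\mod_{0,n+1}\cup(\mod_{0,r+1}\times\mod_{0,s+1})$ (note that $D^\circ=D_t\cap X^\circ$ is already equal to $\mod_{0,r+1}\times\mod_{0,s+1}$, not merely homotopy equivalent to it), your $Y^\circ$ is the paper's $X(n)^+=X^0(n)\cup X^0(t)$, and both proofs invoke Lemma~\ref{lemm: residues blow-ups} on this localized picture and then pass to $X(n)$ via the homotopy-equivalent open inclusions; the paper simply packages the comparison as a single commutative diagram rather than discussing the ``obstacle'' separately.
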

		
		\begin{proof}
		We show that the isomorphisms are compatible with the composition maps $\circ_i$ corresponding to the tree $t=t(r,s,i)$ (see Figure \ref{fig: tree}). It is convenient to set $\mod_{0,n+1}^+=\mod_{0,n+1}\cup\mod_{0,r+1}\times\mod_{0,s+1}$ and $X(n)^+=X^0(n)\cup X^0(t)$, viewed as open subspaces of $\overline{\mod}_{0,n+1}$ and $X(n)$, respectively. By construction, there is a morphism $X(n)^+\rightarrow \mod_{0,n+1}^+$, which is the real blow-up along $\mod_{0,r+1}\times\mod_{0,s+1}$. In the following diagram, the arrows marked $\tau$ are transfer maps for $S^1$-bundles and the arrows marked $i_*$ are induced in homology by closed immersions. According to Lemma \ref{lemm: residues blow-ups}, the first row of the diagram is the composition morphism $\circ_i$ in the operad $\on{Grav}^{GK}$. 
		$$\xymatrix{
		H_{*-2}(\mod_{0,r+1}\times\mod_{0,s+1}) \ar[r]^-{\tau} \ar[d]^-{\simeq} & H_{*-1}(X^0(t)) \ar[r]^-{i_*} \ar@{..>}[d]^-{\simeq} & H_{*-1}(X(n)^+) \ar@{..>}[d]^-{\simeq} & H_{*-1}(\mod_{0,n+1}) \ar[l]^-{\simeq} \ar[dl]^-{\simeq} \\
		H_{*-2}(X(r)\times X(s)) \ar[r]^-{\tau} & H_{*-1}(X(t)) \ar[r]^{i_*} & H_{*-1}(X(n)) & 
		}$$
		The arrows marked $\simeq$ in this diagram are induced by open immersions which are homotopy equivalences, and the diagram commutes. Since the second row is the composition morphism $\circ_i$ in the operad $\on{Grav}^{KSV}$, we are done.
		\end{proof}

	\subsection{Compatibility with the little disks}
	
		The quotient map $FM(n)\rightarrow X(n)$ is a $S^1$-bundle and thus gives rise to a transfer map in homology
		\begin{equation}\label{eqn: transfer X FM}
		H_{*-1}(X(n)) \rightarrow H_*(FM(n))\ .
		\end{equation}	
	
		\begin{prop}\label{prop: KSV FM}
		The transfer map \eqref{eqn: transfer X FM} induces a morphism of operads from the Kimura--Stasheff--Voronov operad to the homology of the Fulton--MacPherson operad:
		$$\on{Grav}^{KSV} \rightarrow H_*(FM)\ .$$
		\end{prop}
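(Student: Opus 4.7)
The plan is to prove, for every tree $t=t(r,s,i)$ with two vertices, the commutativity of the square
\[
\begin{CD}
H_{*-1}(X(r)) \otimes H_{*-1}(X(s)) @>\circ_i>> H_{*-1}(X(n)) \\
@V{\tau \otimes \tau}VV @VV{\tau}V \\
H_*(FM(r)) \otimes H_*(FM(s)) @>>\circ_i> H_*(FM(n))
\end{CD}
\]
whose rows are the composition maps in $\on{Grav}^{KSV}$ and $H_*(FM)$ respectively, and whose vertical maps are the transfers of the $S^1$-bundles $FM(k)\to X(k)$. Since any operadic composition is generated by the partial compositions $\circ_i$, this is what it takes to show that the transfer maps assemble into a morphism of operads $\on{Grav}^{KSV}\to H_*(FM)$.

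The geometric input I would exploit is that $FM(t)\hookrightarrow FM(n)$, which by \eqref{eqn: product decompositions} equals $FM(r)\times FM(s)$, carries two commuting circle actions: the diagonal $S^1$, whose quotient is $X(t)$, and a residual $S^1$ on $X(t)$ whose quotient is $X(r)\times X(s)$. Together they present $FM(t)\to X(r)\times X(s)$ as a $T^2$-bundle, which on the one hand factors as the composition of $S^1$-bundles $FM(t)\to X(t)\to X(r)\times X(s)$, and on the other hand equals the external product of the $S^1$-bundles $FM(r)\to X(r)$ and $FM(s)\to X(s)$. Moreover, the square
\[
\begin{CD}
FM(t) @>>> FM(n) \\
@VVV @VVV \\
X(t) @>>> X(n)
\end{CD}
\]
is a pullback of $S^1$-bundles.

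I would then decompose the target square into three sub-squares by inserting the intermediate groups $H_{*-2}(X(r)\times X(s))$, $H_{*-1}(X(t))$ and $H_*(FM(t))$, so that both compositions factor as
\[
H_{*-1}(X(r)) \otimes H_{*-1}(X(s)) \xrightarrow{\times} H_{*-2}(X(r)\times X(s)) \xrightarrow{\tau} H_{*-1}(X(t)) \xrightarrow{\tau} H_*(FM(t)) \xrightarrow{i_*} H_*(FM(n)).
\]
Each sub-square is an instance of a classical property of the homology transfer of a circle bundle: (i) the transfer of an external product of $S^1$-bundles equals the Künneth product of the two individual transfers, which accounts for the top-left square; (ii) the transfer of an iterated pair of $S^1$-bundles equals the composition of their transfers, which lets both decompositions of the $T^2$-transfer be identified and accounts for the middle square; (iii) naturality of the transfer under a pullback of $S^1$-bundles, applied to the pullback square above, which yields $\tau_n\circ i_*=i_*\circ \tau_{FM(t)/X(t)}$ and accounts for the bottom-right square.

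The hard part will be orientation and sign bookkeeping: the $T^2$-transfer produced from (i) and the one produced from (ii) must be literally the \emph{same} map, and this map must match the sign coming from the suspension built into the definition of $\on{Grav}^{KSV}$. Once a coherent complex orientation of all the $S^1$-fibers is fixed (inherited from the complex structure on $\C^\times$ acting on the configurations), each of (i)--(iii) is standard and no further moduli-theoretic input beyond the geometric observation above is required.
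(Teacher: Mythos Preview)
Your proposal is correct and follows essentially the same route as the paper: both decompose the compatibility square into three sub-squares governed respectively by (i) compatibility of transfers with the K\"unneth isomorphism, (ii) functoriality of transfers for the tower $FM(r)\times FM(s)\to X(t)\to X(r)\times X(s)$, and (iii) naturality of the transfer in the cartesian square relating $FM(t)\hookrightarrow FM(n)$ to $X(t)\hookrightarrow X(n)$. The paper does not dwell on the sign bookkeeping you flag, but otherwise the arguments coincide.
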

		
		\begin{proof}
		We show that the transfer maps \eqref{eqn: transfer X FM} are compatible with the composition maps $\circ_i$ corresponding to the tree $t=t(r,s,i)$ (see Figure \ref{fig: tree}). This amounts to showing that the outer square of the following diagram commutes. The arrows marked $K$ are K\"{u}nneth isomorphisms, the arrows marked $\tau$ are transfer maps for torus bundles, and the arrows marked $i_*$ are induced in homology by closed immersions.
		$$\xymatrix{
		H_*(FM(r))\otimes H_*(FM(s)) \ar[r]^-{K}_-{\simeq} & H_*(FM(r)\times FM(s)) \ar[r]^-{=} & H_*(FM(t)) \ar[r]^-{i_*} & H_*(FM(n)) \\
		H_{*-1}(X(r))\otimes H_{*-1}(X(s)) \ar[r]_-{K}^-{\simeq} \ar[u]^-{\tau\otimes\tau} & H_{*-2}(X(r)\times X(s)) \ar[r]_-\tau \ar@{..>}[u]^-{\tau }& H_{*-1}(X(t))\ar[r]_-{i_*} \ar@{..>}[u]^-{\tau} & H_{*-1}(X(n)) \ar[u]^-{\tau}
		}$$
		%Here the letter $K$ refers to K\"{u}nneth isomorphisms, the letter $\tau$ refers to transfer maps for obvious bundles, and the symbol $i_*$ refers to maps induced by obvious closed immersions in homology. 
		It is enough to show that the three squares forming the diagram commute.
		\begin{enumerate}
		\item The leftmost square commutes because transfer maps are compatible with the K\"{u}nneth isomorphisms.
		\item The central square commutes because of the functoriality of the transfer maps for the composite %$X(r)\times X(s) \rightarrow (X(r)\times X(s))/S^1 \rightarrow (X(r)/S^1) \times (X(s)/S^1) = FM(r)\times FM(s)$.
		$FM(r)\times FM(s) \rightarrow (FM(r)\times FM(s))/S^1 = X(t) \rightarrow (FM(r)/S^1) \times (FM(s)/S^1) = X(r)\times X(s)$.
		\item The rightmost square commutes because the following square is cartesian.
		$$\xymatrix{
		FM(t) \ar@{^{(}->}[r] \ar[d] & FM(n) \ar[d] \\
		X(t) \ar@{^{(}->}[r] & X(n)	
		}$$
		\end{enumerate}
		\end{proof}
		
	\subsection{Equivalence of the two definitions of the gravity operad}
	
		\begin{theo}
		The natural isomorphisms $\on{Grav}^{GK}(n) \stackrel{\simeq}{\longrightarrow} \on{Grav}^W(n)$ induce an isomorphism of operads between the homology of the Getzler--Kapranov model and the homology of the Westerland model.
		\end{theo}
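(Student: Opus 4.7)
The plan is to use the Kimura--Stasheff--Voronov operad $\on{Grav}^{KSV}$ as a bridge, assembling the comparison results already proved in this section. First, I would invoke Proposition \ref{prop: iso GK KSV} to identify $\on{Grav}^{GK}$ with $\on{Grav}^{KSV}$ as operads. Then, composing the operad morphism of Proposition \ref{prop: KSV FM} with the isomorphism $H_*(FM)\cong H_*(\od)$ induced by Proposition \ref{prop: qiso D FM}, I obtain an operad morphism
$$\on{Grav}^{GK}\stackrel{\simeq}{\longrightarrow}\on{Grav}^{KSV}\longrightarrow H_*(\od).$$
What remains is to show that this morphism factors through an isomorphism onto the suboperad $\on{Grav}^W\subset H_*(\od)$.

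The core geometric input is the principal $S^1$-bundle $FM(n)\to X(n)$ for $n\geq 2$. Since $FM(n)$ is $S^1$-induced (weakly equivalent as an $S^1$-space to $S^1\times X(n)$), the Gysin sequence degenerates and yields a splitting
$$H_*(FM(n)) \;\cong\; H_*(X(n))\,\oplus\,\alpha\cdot H_{*-1}(X(n)),$$
where $\alpha\in H_1(S^1)$ is a generator. The BV operator $\Delta$ induced by the $S^1$-action sends $x\mapsto\alpha\cdot x$ on the first summand and vanishes on the second, so $\ker(\Delta)=\alpha\cdot H_{*-1}(X(n))$ and the transfer map $\tau:H_{*-1}(X(n))\to H_*(FM(n))$ of \eqref{eqn: transfer X FM} is injective with image exactly $\ker(\Delta)$. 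Under the isomorphism $H_*(FM)\cong H_*(\od)$, this $\Delta$ matches the BV operator used to define $\on{Grav}^W$ in Section~2.2, since both are induced by the same $S^1$-action transported along the zig-zag of weak equivalences between $FM$ and $\od$. Hence the operad map above takes values in $\on{Grav}^W\subset H_*(\od)$ and is an isomorphism onto it.

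The step I expect to be most delicate is the bookkeeping required to verify that the resulting operad isomorphism $\on{Grav}^{GK}\to\on{Grav}^W$ realizes the natural identification of the underlying graded vector spaces $H_{*-1}(\mod_{0,n+1})$ referred to in the statement. Both sides realize the same abstract identification: the Getzler--Kapranov side via residues (which, by Lemma \ref{lemm: residues blow-ups}, coincide up to suspension with transfers in the real blow-up model $X(n)$), and the Westerland side via the transfer for $FM(n)\to X(n)$ followed by $FM\simeq\od$. Checking compatibility reduces to a diagram chase using the commutative square of Section~4.1 relating $C(n)\hookrightarrow FM(n)$ to $\mod_{0,n+1}\hookrightarrow X(n)$, together with the agreement of the two transfer-based realizations of $H_{*-1}(\mod_{0,n+1})$.
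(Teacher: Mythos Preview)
Your proposal is correct and follows essentially the same route as the paper: both use $\on{Grav}^{KSV}$ as a bridge via Propositions \ref{prop: iso GK KSV} and \ref{prop: KSV FM}, and then identify the image of the transfer map inside $H_*(FM)\cong H_*(\od)$ with the suboperad $\on{Grav}^W=\ker(\Delta)$. The paper packages this as a commutative square of symmetric sequences (with $\on{Grav}^W\hookrightarrow H_*(FM)$ as one side) and reads off the conclusion formally, whereas you unpack the commutativity via the Gysin splitting; these are the same argument in different clothing.
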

		
		\begin{proof}
		We form the following commutative square of symmetric sequences.
		$$\xymatrix{
		\on{Grav}^{GK} \ar[d]^-{\simeq} \ar[r]^-{(1)}_-{\simeq} & \on{Grav}^{KSV} \ar@{_{(}->}[d]^-{(2)} \\
		\on{Grav}^W \ar@{^{(}->}[r]_-{(3)} & H_*(FM)
		}$$
		The arrow labeled (1) is an isomorphism of operads by Proposition \ref{prop: iso GK KSV}; the arrow labeled (2) is a morphism of operads by Proposition \ref{prop: KSV FM}; the arrow labeled (3) is a morphism of operads by the construction of $\on{Grav}^W$ and Proposition \ref{prop: qiso D FM}. Thus, the remaining arrow is an isomorphism of operads.
		\end{proof}

\bibliographystyle{alpha}

\bibliography{biblio}

\begin{thebibliography}{GSNPR05}

\bibitem[And04]{andrebook}
Y.~Andr\'e.
\newblock {\em Une introduction aux motifs (motifs purs, motifs mixtes,
  p\'eriodes)}, volume~17 of {\em Panoramas et Synth\`eses}.
\newblock Soci\'et\'e Math\'ematique de France, Paris, 2004.

\bibitem[AP15]{almpetersen}
J.~Alm and D.~Petersen.
\newblock Brown's dihedral moduli space and freedom of the gravity operad.
\newblock arxiv:1509.09274, 2015.

\bibitem[CH17]{ciricimixed}
J.~Cirici and G.~Horel.
\newblock Mixed {H}odge structures and formality of symmetric monoidal
  functors.
\newblock {\em arXiv preprint arXiv:1703.06816}, 2017.

\bibitem[Cos11]{costello}
K.~Costello.
\newblock {\em Renormalization and effective field theory}, volume 170 of {\em
  Mathematical Surveys and Monographs}.
\newblock American Mathematical Society, Providence, RI, 2011.

\bibitem[Del71]{delignehodge2}
P.~Deligne.
\newblock Th{\'e}orie de {H}odge. {II}.
\newblock {\em Inst. Hautes {\'E}tudes Sci. Publ. Math.}, (40):5--57, 1971.

\bibitem[Fre17a]{fressehomotopy1}
B.~Fresse.
\newblock {\em Homotopy of operads and {G}rothendieck--{T}eichm{\"u}ller groups
  : Part 1}.
\newblock American Mathematical Soc., 2017.

\bibitem[Fre17b]{fressehomotopy2}
B.~Fresse.
\newblock {\em Homotopy of Operads and Grothendieck--Teichm{\"u}ller Groups:
  Part 2}.
\newblock American Mathematical Soc., 2017.

\bibitem[Get94]{getzlertopologicalgravity}
E.~Getzler.
\newblock Two-dimensional topological gravity and equivariant cohomology.
\newblock {\em Comm. Math. Phys.}, 163(3):473--489, 1994.

\bibitem[Get95]{getzleroperadsmodulispaces}
E.~Getzler.
\newblock Operads and moduli spaces of genus {$0$} {R}iemann surfaces.
\newblock In {\em The moduli space of curves ({T}exel {I}sland, 1994)}, volume
  129 of {\em Progr. Math.}, pages 199--230. Birkh{\"a}user Boston, Boston, MA,
  1995.

\bibitem[GJ94]{getzlerjones}
E.~Getzler and J.~D.~S. Jones.
\newblock Operads, homotopy algebra, and iterated integrals for double loop
  spaces.
\newblock {\em \texttt{arXiv:hep-th/9403055}}, 1994.

\bibitem[GK95]{getzlerkapranovcyclic}
E.~Getzler and M.~M. Kapranov.
\newblock Cyclic operads and cyclic homology.
\newblock In {\em Geometry, topology, \& physics}, Conf. Proc. Lecture Notes
  Geom. Topology, IV, pages 167--201. Int. Press, Cambridge, MA, 1995.

\bibitem[GK98]{getzlerkapranovmodular}
E.~Getzler and M.~M. Kapranov.
\newblock Modular operads.
\newblock {\em Compositio Math.}, 110(1):65--126, 1998.

\bibitem[GSNPR05]{guillenmoduli}
F.~Guill\'en~Santos, V.~Navarro, P.~Pascual, and A.~Roig.
\newblock Moduli spaces and formal operads.
\newblock {\em Duke Math. J.}, 129(2):291--335, 2005.

\bibitem[Hor15]{horelprofinite}
G.~Horel.
\newblock Profinite completion of operads and the
  {G}rothendieck-{T}eichm{\"u}ller group.
\newblock {\em arXiv preprint arXiv:1504.01605}, 2015.

\bibitem[Kle01]{kleindualizing}
J.~Klein.
\newblock The dualizing spectrum of a topological group.
\newblock {\em Math. Ann.}, 319(3):421--456, 2001.

\bibitem[Kon99]{kontsevichoperadsmotives}
M.~Kontsevich.
\newblock Operads and motives in deformation quantization.
\newblock {\em Lett. Math. Phys.}, 48(1):35--72, 1999.
\newblock Mosh{\'e} Flato (1937--1998).

\bibitem[Kon17]{kontsevichbourbaki}
M.~Kontsevich.
\newblock Derived {G}rothendieck-{T}eichm\"uller group and graph complexes
  [after {T}. {W}illwacher].
\newblock In {\em S{\'e}minaire {B}ourbaki, 69{\`e}me ann{\'e}e (2016-2017),
  no. 1126}. 2017.

\bibitem[KSV95]{kimurastasheffvoronov}
T.~Kimura, J.~Stasheff, and A.~A. Voronov.
\newblock On operad structures of moduli spaces and string theory.
\newblock {\em Comm. Math. Phys.}, 171(1):1--25, 1995.

\bibitem[Lur09]{luriehighertopos}
J.~Lurie.
\newblock {\em Higher Topos Theory (AM-170)}.
\newblock Princeton University Press, 2009.

\bibitem[Lur16]{luriehigher}
J.~Lurie.
\newblock Higher algebra. 2016.
\newblock {\em Preprint, available at http://www. math. harvard. edu/\~{}
  lurie}, 2016.

\bibitem[LV14]{lambrechtsvolic}
P.~Lambrechts and I.~Voli\'{c}.
\newblock Formality of the little {$N$}-disks operad.
\newblock {\em Mem. Amer. Math. Soc.}, 230(1079):viii+116, 2014.

\bibitem[Pet14]{petersenformality}
D.~Petersen.
\newblock Minimal models, {GT}-action and formality of the little disk operad.
\newblock {\em Selecta Math. (N.S.)}, 20(3):817--822, 2014.

\bibitem[Pha11]{phambook}
F.~Pham.
\newblock {\em Singularities of integrals}.
\newblock Universitext. Springer, London; EDP Sciences, Les Ulis, 2011.
\newblock Homology, hyperfunctions and microlocal analysis, With a foreword by
  Jacques Bros, Translated from the 2005 French original, With supplementary
  references by Claude Sabbah.

\bibitem[Sal01]{salvatoresummable}
P.~Salvatore.
\newblock Configuration spaces with summable labels.
\newblock In {\em Cohomological methods in homotopy theory ({B}ellaterra,
  1998)}, volume 196 of {\em Progr. Math.}, pages 375--395. Birkh{\"a}user,
  Basel, 2001.

\bibitem[SS03]{schwedestable}
S.~Schwede and B.~Shipley.
\newblock Stable model categories are categories of modules.
\newblock {\em Topology}, 42(1):103--153, 2003.

\bibitem[Tam03]{tamarkinformality}
D.~Tamarkin.
\newblock Formality of chain operad of little discs.
\newblock {\em Letters in Mathematical Physics}, 66(1):65--72, 2003.

\bibitem[Wes08]{westerlandequivariant}
C.~Westerland.
\newblock Equivariant operads, string topology, and {T}ate cohomology.
\newblock {\em Math. Ann.}, 340(1):97--142, 2008.

\end{thebibliography}

\end{document}